\newtheorem{claim}{Claim}
\newtheorem{theorem}{Theorem}[section]
\newtheorem{lemma}[theorem]{Lemma}
\newtheorem{conjecture}[theorem]{Conjecture}
\newcommand{\set}[1]{\ensuremath{\left\{#1 \right\}}}
\newcommand{\chin}[1]{\ensuremath{\chi_{n}'(#1)}}
\newenvironment{proofclaim}[1][]%
    {\noindent \emph{Proof.} {}{#1}{}}{$~$\hfill $~\blacklozenge$ \vspace{0.2cm}}
\definecolor{defblue}{rgb}{0.4,0,0.84}
\definecolor{greyblue}{rgb}{0.23,0.4,0.70}
\definecolor{orange}{rgb}{1.0,0.5,0.2}
\definecolor{violet}{rgb}{0.55,0,0.55}
\g@addto@macro{\UrlBreaks}{\UrlOrds}
\newcolumntype{Y}{>{\centering\arraybackslash}X}
\begin{document}

\title{{\bf Normal $6$-edge-colorings of cubic graphs with oddness $2$}}

\author
{
	Igor Fabrici\thanks{Pavol Jozef \v Saf\'{a}rik University, Faculty of Science, Ko\v{s}ice, Slovakia} \quad
	Borut Lu\v{z}ar\thanks{Faculty of Information Studies in Novo mesto, Slovenia.} \thanks{Rudolfovo - Science and Technology Centre Novo mesto, Slovenia.} \quad
	Roman Sot\'{a}k\footnotemark[1] \quad
	Diana \v{S}vecov\'{a}\footnotemark[1]
}

\maketitle

{
\begin{abstract}
	A {\em normal edge-coloring} of a cubic graph is a proper edge-coloring, 
	in which every edge is adjacent to edges colored with four distinct colors
	or to edges colored with two distinct colors.
	It is conjectured that $5$ colors suffice for a normal edge-coloring of any bridgeless cubic graph
	and this statement is equivalent to the Petersen Coloring Conjecture.	
	
	In this paper, we extend the result of Mazzuoccolo and Mkrtchyan 
	({\em Normal 6-edge-colorings of some bridgeless cubic graphs, Discrete Appl. Math. 277 (2020), 252--262}),
	who proved that every cycle permutation graph admits a normal edge-coloring with at most $6$ colors.
	In particular, we show that every cubic graph with oddness $2$ admits a normal edge-coloring with at most $6$ colors.
\end{abstract}
}

\medskip
{\noindent\small \textbf{Keywords:} normal edge-coloring, Petersen coloring, Petersen coloring conjecture, oddness}

\section{Introduction}

A {\em normal edge-coloring} of a cubic (multi)graph is a proper edge-coloring, 
in which every edge is adjacent to edges colored with four distinct colors 
(we call such edges {\em rich})
or to edges colored with two distinct colors
(we call such edges {\em poor}).
If for a normal edge-coloring at most $k$ colors are used, we refer to it as a {\em normal $k$-edge-coloring}.
The smallest $k$, for which a graph $G$ admits a normal $k$-edge-coloring is the {\em normal chromatic index} of $G$,
denoted by $\chin{G}$.
If a cubic graph admits a proper edge-coloring with $3$ colors ({\em class I graphs}), 
then every edge is poor, and hence the coloring is also normal.
Therefore, the only interesting classes of graphs for a normal coloring are cubic graphs with chromatic index $4$ ({\em class II graphs}),
particularly the class of snarks.

The normal edge-coloring was defined by Jaeger~\cite{Jae85}, 
who proposed the following conjecture.
\begin{conjecture}[Jaeger~\cite{Jae85}]
	\label{con:Nor}
	For any bridgeless cubic graph $G$, it holds that
	$$
		\chin{G} \le 5\,.
	$$
\end{conjecture}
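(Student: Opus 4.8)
The plan is to reduce the statement to snarks and then reformulate it as the existence of a Petersen coloring. Every class I cubic graph is properly $3$-edge-colorable, so all of its edges are automatically poor and $\chin{G}\le 3$; hence it suffices to treat class II graphs. Among those it is standard that one may further restrict to cyclically $4$-edge-connected graphs, since across a nontrivial edge cut a normal $5$-edge-coloring can be assembled from normal $5$-edge-colorings of the two sides once the colors crossing the cut are matched, and this matching can be arranged to preserve the rich/poor status of the cut edges. The graphs that survive this reduction are exactly the snarks, so the entire burden falls on them.

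The second step is to recast a normal $5$-edge-coloring as a Petersen coloring. Fixing a normal $5$-edge-coloring of the Petersen graph $P$, one verifies that the local color patterns around a vertex that can arise in \emph{any} normal $5$-edge-coloring are precisely those realized at the vertices of $P$. Consequently $G$ admits a normal $5$-edge-coloring if and only if there is a map $E(G)\to E(P)$ sending, for every vertex of $G$, the three incident edges to three edges meeting at a common vertex of $P$; this is exactly the Petersen coloring whose existence is the content of the Petersen Coloring Conjecture quoted in the abstract.

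For the constructive core I would argue by induction on $|V(G)|$, taking as base cases the colorings already available for structured snarks: the Petersen graph, the cycle permutation graphs colored by Mazzuoccolo and Mkrtchyan, and the oddness $2$ graphs treated in the present paper. For a general cyclically $4$-edge-connected snark $G$ one would identify a reducible configuration — a small subgraph whose contraction or replacement produces a smaller bridgeless cubic graph $G'$ — color $G'$ by induction, and then extend the coloring across the configuration so that every affected edge remains rich or poor. Keeping the boundary of the configuration small bounds the number of interface color patterns, turning the extension step into a finite case analysis.

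The hard part, and the reason this remains a conjecture rather than a theorem, is that no reducible configuration is known that occurs in every snark while still forcing the optimal bound $5$. The methods here deliver $6$ colors on oddness $2$ graphs, and the entire difficulty is concentrated in the gap between $6$ and $5$: an edge forced to be rich imposes constraints that travel around the graph, so the dichotomy must be controlled globally rather than patched by local surgery. Equivalently, the obstacle is the full strength of the Petersen Coloring Conjecture — whose resolution would also yield the Berge--Fulkerson and $(5,2)$-cycle-cover conjectures — and absent a new global invariant or a genuinely new reduction respecting the bound $5$, the inductive scheme above stalls precisely at this point.
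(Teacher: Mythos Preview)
The statement you were asked to prove is a \emph{conjecture}, not a theorem: the paper does not prove it and explicitly says that it ``remains widely open in general.'' So there is no proof in the paper to compare your attempt against. Your proposal itself acknowledges this in its final paragraph, where you concede that the inductive scheme stalls and that the obstacle is the full strength of the Petersen Coloring Conjecture. In that sense your write-up is honest, but it is not a proof; it is a survey of reductions followed by an admission that the core step is missing.

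Two remarks on the content of your sketch. First, the reduction across small edge cuts is more delicate than you indicate: matching the colors on the cut edges so that every cut edge remains rich or poor on \emph{both} sides simultaneously is not a simple permutation argument, and in fact this is one of the reasons the conjecture is hard even for graphs built from small pieces. Second, your proposed induction via ``reducible configurations'' is not a plan but a wish: no such configuration is known, and you say so yourself. The paper's contribution is precisely to push the weaker bound $\chin{G}\le 6$ from permutation snarks to all oddness-$2$ graphs; it makes no claim about $5$ colors, and neither should you.
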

Jaeger showed that a normal $5$-edge-coloring of a bridgeless cubic graph $G$ is equivalent 
to a Petersen coloring of $G$, and hence that
Conjecture~\ref{con:Nor} is equivalent to the Petersen Coloring Conjecture~\cite{Jae88},
which asserts that the edges of every bridgeless cubic graph $G$ can be colored by using the edges of the Petersen graph $P$ as colors
in such a way that adjacent edges of $G$ are colored by adjacent edges of $P$;
in particular, a bridgeless cubic graph admits a normal $5$-edge-coloring if and only if it admits a Petersen coloring.

In general, it is known that every cubic graph (with the bridgeless condition omitted) admits a normal $7$-edge-coloring~\cite{MazMkr20}, 
and the bound is tight, e.g., by any cubic graph that contains as a subgraph the complete graph $K_4$ with one edge subdivided.
When considering only bridgeless cubic graphs, 
already proving that a graph is normally $6$-edge-colorable is challenging,
and only a handful of results were published.
For example, Mazzuoccolo and Mkrtchyan~\cite{MazMkr20b} proved that every claw-free cubic graph, tree-like snark, 
and permutation snark (i.e., cycle permutation cubic graphs of class II) admit a normal $6$-edge-coloring.
With at most $5$ colors available, only very particular graphs are known to admit a normal edge-coloring, 
see, e.g.,~\cite{FerMazMkr20,HagSte13,SedSkr24b,SedSkr24,ZhoHaoLuoLuo26}.
Hence, Conjecture~\ref{con:Nor} remains widely open in general.

In this paper, we improve the result of Mazzuoccolo and Mkrtchyan~\cite{MazMkr20b} on permutation snarks;
in particular, we extend it to bridgeless cubic graphs with oddness $2$, 
i.e., bridgeless cubic graphs of class II which admit a $2$-factor with exactly two odd cycles.
\begin{theorem}
	\label{thm:main}
	Every bridgeless cubic graph with oddness at most $2$ admits a normal $6$-edge-coloring.
\end{theorem}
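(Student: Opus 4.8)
The plan is to start from a $2$-factor $F$ of $G$ consisting of two odd cycles $C_1,C_2$ together with (possibly) several even cycles $D_1,\dots,D_t$, and to let $M=E(G)\setminus E(F)$ be the complementary perfect matching; oddness $0$ is the $3$-edge-colorable case already noted in the introduction, so we may assume oddness exactly $2$. The even cycles of $F$ turn out to be essentially passive: coloring every even cycle alternately with $1$ and $2$ and every edge of $M$ with $3$ gives a proper $3$-edge-coloring in which every edge incident only to even cycles and $M$ is poor (such an edge sees only $\{1,2\}$ or only $\{2,3\}$ on its four neighbors). Thus all the difficulty is concentrated at $C_1$ and $C_2$, which cannot be $2$-edge-colored.

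To fit the odd cycles into a $\{1,2,3\}$-coloring I would delete one edge $f_i=u_iu_i'$ from each $C_i$: then $F-f_1-f_2$ is a disjoint union of even paths and even cycles, hence properly $2$-edge-colorable with $\{1,2\}$, which together with color $3$ on $M$ yields a proper $3$-edge-coloring of $G-f_1-f_2$. Reinserting $f_1,f_2$, a direct check shows that each $f_i$ has neighbors colored $\{1,2,3,3\}$ (the two $C_i$-edges flanking $f_i$ receive distinct colors because the path $C_i-f_i$ has even length), so no single extra color can make $f_i$ poor or rich on its own; a genuine repair is forced, and it cannot be purely local, since recoloring a matching edge or a cycle edge near $f_i$ with a color from $\{4,5,6\}$ immediately creates a new non-normal edge one step further along an even structure, and this cascade does not terminate inside an even cycle.

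The resolution, and the heart of the argument, is to choose $f_1,f_2$ together with a \emph{corridor}: a walk from $C_1$ to $C_2$ built from matching edges and from short arcs of even cycles of $F$. Such a corridor exists because $G$ is connected (equivalently, the multigraph obtained by contracting the cycles of $F$ has an Eulerian trail between its two odd-degree vertices $C_1$ and $C_2$), and one should also exploit the freedom to vary $F$ to make the corridor as simple as possible. Along the corridor one performs a single coordinated recoloring with colors $4,5,6$ — recoloring $f_1$, $f_2$, the matching edges and cycle arcs traversed, and swapping $1\leftrightarrow 2$ on the complementary arcs of the intermediate even cycles as needed — which transports the parity defect of $C_1$ through the intermediate even cycles and cancels it against that of $C_2$; one then verifies that every edge of the corridor, and every edge adjacent to it, becomes rich or poor. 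I would organise this as a finite case analysis according to the lengths modulo $2$ of the relevant arcs and to the local structure at the two ends.

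The main obstacle is exactly this last step: closing up the corridor gadget cleanly at both ends and at every passage through an even cycle, simultaneously, and in all degenerate configurations — $C_1$ or $C_2$ a triangle or a pentagon, $C_1$ and $C_2$ joined by one or several matching edges, matching edges that are chords of a cycle of $F$, or short ($4$-)cycles along the corridor — which must be handled by ad hoc colorings or by rerouting the corridor. The reason the bound is $6$ rather than the conjectured $5$ is precisely that the palette $\{4,5,6\}$ provides just enough slack to complete the corridor gadget in every case, which is why Conjecture~\ref{con:Nor} stays open; the argument generalizes the permutation-snark result of Mazzuoccolo and Mkrtchyan~\cite{MazMkr20b}, in which the corridor is simply a single matching edge joining the two odd cycles.
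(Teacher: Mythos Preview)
Your plan has a genuine gap at its core: you propose to route a \emph{single} corridor from $C_1$ to $C_2$ and perform a ``coordinated recoloring with colors $4,5,6$'' along it, but you never specify what that recoloring is, and the cascade you yourself diagnose is real---recoloring $f_i$ and one incident matching edge leaves a non-normal cycle edge at the other endpoint of $f_i$, and pushing the repair further does not close up on an odd cycle. The paper's resolution is different in kind: it uses \emph{two} vertex-disjoint $(C_1,C_2)$-paths $P_1,P_2$, subject to the ``good'' condition that no edge of $F$ joins $V(P_1)$ to $V(P_2)$. The two path-endpoints on each odd cycle $C_i$ split it into two arcs, one $2$-colored with $\{3,4\}$ and the other with $\{5,6\}$; matching edges receive color $1$ off the paths and $2$ on them. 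The parity obstruction on $C_i$ dissolves precisely because the two arcs carry different color pairs, and normality is then a short direct check. Your description of the Mazzuoccolo--Mkrtchyan permutation-snark argument as ``a single matching edge joining the two odd cycles'' is accordingly inaccurate: in the paper's language their construction already selects two matching edges with non-adjacent endpoints on each cycle---the two-path scheme in the special case with no even cycles to thread through.

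The substantive content of the paper, which your outline does not anticipate, is the proof that two good $(C_1,C_2)$-paths exist whenever $G$ is not the Petersen graph. This is done by first reducing, via a minimal-counterexample argument, to $G$ simple, $3$-edge-connected, triangle-free, and free of a particular $3$-edge-cut configuration (Claims~\ref{cl:multi}--\ref{cl:special3cut}), and then by a lengthy case analysis (Claim~\ref{cl:goodpaths}) on how candidate $(C_1,C_2)$-paths interact with one another and with the even cycles of $F$. The degeneracies you plan to treat ``by ad hoc colorings or by rerouting the corridor'' are in fact where almost all of the work lies, and they are organized around the two-path existence question rather than around closing up a single-corridor gadget.
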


In our proof, we use the idea of Mazzuoccolo and Mkrtchyan used for proving that permutation snarks admit normal $6$-edge-coloring.
They assign the nonzero elements of the elementary abelian group $\mathbb{Z}_2^3$ to the edges of a graph
in order to construct a nowhere-zero $\mathbb{Z}_2^3$-flow, and finally replace the elements of the group
by $6$ colors, showing that two elements can be represented by the same color to obtain a normal $6$-edge-coloring.
We extend this approach to graphs with oddness $2$.

\section{Preliminaries}
\label{sec:prel}

In this section, we introduce terminology, notation, 
and present several auxiliary results needed in our proof.

A vertex of degree $k$ is called a {\em $k$-vertex}.
For a set of vertices or edges $X$, 
by $G \setminus X$ we denote the graph with the objects from $X$ removed;
similarly, we denote by $G \cup X$ the graph with the objects from $X$ added.

We refer to an edge which is either poor or rich as {\em normal}.
We call an edge-coloring of a graph, in which some edges might not be colored,
a {\em partial normal edge-coloring} if no two adjacent edges have the same color,
and every edge which has all adjacent edges colored is normal.

By a classical result of Petersen~\cite{Pet1891}, we know that
every bridgeless cubic graph admits a perfect matching and 
therefore also a $2$-factor.
The structure of $2$-factors in cubic graphs plays a special role in various types 
of edge-colorings.
{\em Oddness} of a graph $G$, denoted by $\omega(G)$, is the minimum number of odd cycles in a $2$-factor of $G$
over all $2$-factors of $G$.
Let us remark here that in a proper edge-coloring of a bridgeless cubic graph with oddness $2$,
the number of edges of the color appearing the least number of times 
(usually, we will use the coloring with colors from $\set{1,2,3,4}$ 
and the color class $4$ will be the smallest) is at least $2$.

Given an edge-coloring $\varphi$, we denote the set of colors appearing on the edges incident to a vertex $v$
with $C_{\varphi}(v)$ or simply $C(v)$ if the coloring is clear from the context.
Given a proper edge-coloring of a graph $G$, the maximal path starting at vertex $x$ and having all edges colored with colors $\alpha$ and $\beta$
is called the {\em $(x;\alpha,\beta)$-path}.
When the starting vertex is irrelevant, we refer to the path as to an {\em $(\alpha,\beta)$-path}.
We say that we {\em swap the colors} along an $(x;\alpha,\beta)$-path $P$ if we recolor every edge of color $\alpha$ of $P$ to a color $\beta$
and vice-versa.

The following lemmas discuss operations on bridgeless cubic graphs with oddness $2$, 
which do not increase 
the oddness in the modified graph.

Given a $3$-cycle $T$ in a cubic graph $G$,
the {\em $\Delta$-reduction of $G$ by $T$} or simply a {\em $\Delta$-reduction of $G$} 
is the operation identifying the vertices of $T$ into one vertex,
i.e., the three edges of $T$ are contracted.
\begin{lemma}[Triangle Reduction Lemma]
	\label{lem:3reduce}
	Let $G$ be a bridgeless cubic graph with $\omega(G)=2$ and at least one $3$-cycle $T$.
	Then, the graph $G'$ obtained from $G$ after a $\Delta$-reduction by $T$ is
	also a bridgeless cubic graph with $\omega(G')=2$.
\end{lemma}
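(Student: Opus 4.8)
The plan is to verify the three required properties of $G'$ separately: cubicity, bridgelessness, and oddness exactly $2$. For cubicity, note that contracting the three edges of a $3$-cycle $T = v_1v_2v_3$ merges the three vertices into a single vertex $v_T$; each $v_i$ had exactly one edge leaving $T$, so $v_T$ receives exactly three edges and every other vertex is untouched. (One should remark that if two of the three outgoing edges of $T$ go to a common vertex, a multi-edge is created, and if all three go to a common vertex we get a specific small graph; this is acceptable since we work with multigraphs, but it is worth checking that bridgelessness and the oddness bound still hold in these degenerate cases, or observing they cannot occur because $G$ is bridgeless.)

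For bridgelessness, I would argue by contradiction: suppose $e$ is a bridge of $G'$. Since $G$ is bridgeless, $e$ is also an edge of $G$, and the only structural change is the contraction of $T$. A cut edge of $G'$ separating $V(G') = A \sqcup B$ (with $e$ between them and $v_T$, say, in $A$) pulls back to an edge cut of $G$ separating $A \setminus \{v_T\} \cup \{v_1,v_2,v_3\}$ from $B$; since $T$ is connected, this is a genuine vertex bipartition of $G$, and the only edge of $G$ crossing it is $e$ — contradicting that $G$ is bridgeless. So $G'$ is bridgeless.

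For the oddness, I first show $\omega(G') \le 2$ by transporting a $2$-factor. Take a $2$-factor $F$ of $G$ with exactly two odd cycles. Exactly two of the three edges of $T$ lie in $F$ (a $2$-factor meets each vertex in degree $2$, so it cannot contain all three edges of a triangle, and at each $v_i$ at least one of the two triangle-edges at $v_i$ lies in $F$ — a short case check shows the count is exactly two). So $F$ restricted to $T$ is a path through all three vertices of $T$, say $v_1 v_2 v_3$, and the edge of $F$ leaving $T$ at $v_1$ together with the edge leaving at $v_3$; contracting $T$ turns this path-through-$T$ into a single vertex passed through by these two edges, so $F$ descends to a $2$-factor $F'$ of $G'$ in which the cycle through $v_T$ is shorter by $2$ (hence of the same parity) and all other cycles are unchanged. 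Thus $\omega(G') \le \omega(G) = 2$. Since $G'$ is a bridgeless cubic graph of class II — it cannot be $3$-edge-colorable, since a proper $3$-edge-coloring of $G'$ would expand back to one of $G$ by assigning the three triangle edges the three colors in the appropriate cyclic fashion, contradicting $\omega(G) = 2$ — we have $\omega(G') \ge 2$, so $\omega(G') = 2$.

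The main obstacle I expect is the degenerate-contraction bookkeeping: handling the possibility that the three edges leaving $T$ are not all distinct endpoints (creating parallel edges at $v_T$) and confirming that the pull-back argument for bridges and the push-down argument for $2$-factors remain valid there, and separately nailing down cleanly why exactly two (not one, not three) edges of $T$ belong to any given $2$-factor. The flow of the argument is otherwise routine; the care lies in the case analysis around $T$.
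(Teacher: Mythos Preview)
Your argument has a genuine gap in the step ``exactly two of the three edges of $T$ lie in $F$.'' The justification you give --- ``a $2$-factor meets each vertex in degree $2$, so it cannot contain all three edges of a triangle'' --- is simply false: if all three edges of $T$ belong to $F$, then $T$ is itself a $3$-cycle of $F$, and each $v_i$ has degree exactly $2$ in $F$ coming from the two triangle edges. So the correct dichotomy is that either two or all three edges of $T$ lie in $F$, and you have only handled the first case.

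The missing case is not a bookkeeping detail but the substantive heart of the lemma. When $T$ is a cycle of $F$, it is one of the two odd cycles, and all three edges leaving $T$ lie in the matching $M$. After contraction, the new vertex $v_T$ has all three incident edges in $M$ and none in $F$, so $F$ does \emph{not} descend to a $2$-factor of $G'$; there is nothing to push down. The paper handles this case by passing to the edge-coloring viewpoint: a $2$-factor with exactly two odd cycles corresponds to a proper $4$-edge-coloring with the matching colored $1$, the even cycles colored $2,3$, and one edge of each odd cycle colored $4$. After contraction the three edges at $v_T$ all receive color $1$; the paper then performs a Kempe-chain swap on a suitable $(1,2)$-path to repair the coloring at $v_T$ so that exactly two edges of $G'$ carry color $4$, which yields the desired $2$-factor with at most two odd cycles. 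Your push-down argument works cleanly for the two-edge case (and matches the paper's Case~2), and your lower bound $\omega(G')\ge 2$ via pulling back a $3$-edge-coloring is correct; but you need an entirely different mechanism for the case $T\subseteq F$.
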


\begin{proof}	
	Let $T = uvw$ be a $3$-cycle in $G$ such that $G'$ is obtained after a $\Delta$-reduction of $G$ by $T$.

	We first show that $G'$ is bridgeless.
	Suppose, to the contrary, that $G'$ contains a bridge $xy$. 
	If $T$ reduces into a vertex different from $x$ or $y$, then the bridge $xy$ 
	must also be present in $G$, a contradiction. 
	We can therefore assume that $T$ reduces into one of the vertices $x$ and $y$, say $x$.
	Then, in $G$, one of the vertices $u, v, w$, say $u$, is incident with $y$, but then $uy$ is a bridge in $G$, a contradiction. 

	Now we show that $\omega(G') = 2$.
	Let $F$ be a $2$-factor of $G$ with two odd cycles.
	Since $\omega(G) = 2$, $G$ is not properly $3$-edge-colorable,
	but there is a proper edge-coloring $\varphi$ of $G$
	such that the edges of the matching $M = E(G)\setminus E(F)$ are colored with $1$,
	the edges of the even cycles of $F$ with colors $2$ and $3$,
	one edge of each of the two odd cycles of $F$ with color $4$, 
	and the rest of their edges with colors $2$ and $3$.	
	We will show that $G'$ also admits a proper edge-coloring $\varphi'$ with exactly two edges of color $4$.
	This will imply that $\omega(G') = 2$, otherwise $\omega(G') = 0$ meaning that $G'$ would admit a proper $3$-edge-coloring $\varphi''$.
	However, in such a case, the three edges incident with $x$ in $G'$ would be colored with three distinct colors,
	and $\varphi''$ would induce a partial edge-coloring in $G$ with only the edges of $T$ non-colored.
	It is easy to see that the coloring can be extended to the three edges of $T$, a contradiction.
	Hence, if $\omega(G') \le 2$, then $\omega(G') = 2$.
	
	In order to prove that $\omega(G') \le 2$, we consider two cases.
	
	\medskip
	\noindent {\bf Case 1:} \quad
	Suppose that $T$ is in $F$ and, without loss of generality, assume
	that $\varphi(uv) = 2$, $\varphi(vw) = 3$, and $\varphi(uw) = 4$.
	Clearly, all the edges with exactly one endvertex incident with $T$, denote them $e_u=ux_1$, $e_v=vx_2$, and $e_w=wx_3$ (by their endvertex in $T$) are in $M$.
	Now, consider the coloring $\varphi'$ of $G'$ induced by $\varphi$. 	
	Let $x$ be the vertex corresponding to the identified vertices of $T$ in $G'$.
	The three edges incident to $x$ (i.e., the edges $e_u$, $e_v$, and $e_w$) 
	are all colored by $1$ and $x$ is the only
	vertex at which incident edges have the same color.	
	Now, uncolor the three edges incident with $x$.
	
	Observe that there is exactly one edge of color $4$ in $G'$ (the one on the odd cycle of $F$ distinct from $T$).	
	Therefore, at least two of the vertices in $\set{x_1,x_2,x_3}$ are incident with an edge of color $2$.
	Moreover, the graph induced on the edges of colors $1$ or $2$ is a union of cycles and two paths,
	whose endvertices are $x_1$, $x_2$, $x_3$, and a vertex incident with the edge of color $4$
	(note that the latter vertex may be one of $x_1$, $x_2$, $x_3$, in which case one of the two paths has length $0$).
	Without loss of generality, we may assume that the endvertices of a path $P$ of length more than $0$ are $x_1$ and $x_2$ (see Figure~\ref{fig:delta-red-c1}).
	\begin{figure}[htp!]
		$$
			\includegraphics{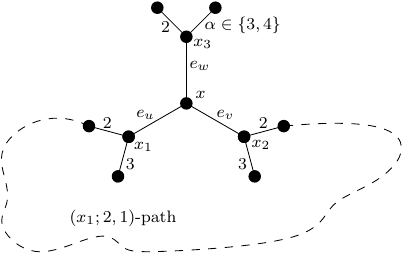}
		$$
		\caption{When $T$ is in $F$, after uncoloring the edges incident to $x$,
			there is a $(1,2)$-path in $G'$ starting and ending in two vertices adjacent to $x$,
			without loss of generality, we can assume those are the vertices $x_1$ and $x_2$.}
		\label{fig:delta-red-c1}
	\end{figure}
	
	Now, swap the colors on the path $P$ and observe that $x_1$ and $x_2$ are now both incident with
	edges of colors $1$ and $3$ (or $4$ if they are adjacent).
	Therefore, we can color $e_u$ with $2$, $e_v$ with $4$ (or $3$ if $x_1$ and $x_2$ are adjacent and $\varphi'(x_1x_2) = 4$), and $e_w$ with $1$.	
	In this way, $\varphi'$ is a proper coloring of $G'$ using exactly two edges of color $4$.
	This means that $\omega(G') \le 2$.
	
	\medskip
	\noindent {\bf Case 2:} \quad	
	Suppose that $T$ is not in $F$;
	in particular this means that there is an edge of $T$, say $uv$, in $M$.
	Let $C$ be the cycle of $F$ containing $vw$ and $uw$.
	Clearly, it contains also the two edges $e_u$ and $e_v$ incident with $u$ and $v$ not on $T$, respectively.
	Now, if one of the edges $vw$ and $uw$ is colored with $4$, say $uw$, then we swap the colors of $e_u$ and $uw$.
	Note that after the $\Delta$-reduction of $G$ by $T$, the parity of the length of $C'$, 
	the cycle in $G'$ corresponding to $C$ in $G$ after contracting the edges $vw$ and $uw$, 
	remains the same as the parity of the length of $C$, 
	and so $G'$ has a $2$-factor with exactly two odd cycles.
	Moreover, the coloring $\varphi'$ induced by $\varphi$ is a proper edge-coloring using exactly two edges of color $4$,
	implying that $\omega(G') \le 2$.
	This completes the proof.
\end{proof}

A similar statement to the following, presented without proof, appears in Lukot{\!}'ka~et al.~\cite[Lemma~4]{LukMacMazSko15}.

\begin{lemma}[$3$-Edge-Cut Reduction Lemma]
	\label{lem:3cut-reduce}
	Let $G$ be a $3$-edge-connected cubic graph with $\omega(G)=2$ and a non-trivial $3$-edge-cut $C = \set{u_1v_1,u_2v_2,u_3v_3}$.
	Let $G_u$ be the component of $G - C$ containing the vertices $u_1$, $u_2$, and $u_3$,
	together with an additional vertex $x_u$ and the edges $u_1x_u$, $u_2x_u$, and $u_3x_u$.
	Similarly, let $G_v$ be the component of $G - C$ containing the vertices $v_1$, $v_2$, and $v_3$,
	together with an additional vertex $x_v$ and the edges $v_1x_v$, $v_2x_v$, and $v_3x_v$.
	Then, $G_u$ and $G_v$ are also bridgeless cubic graphs with oddness at most $2$.
\end{lemma}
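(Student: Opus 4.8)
\medskip
\noindent\emph{Proof plan.} The plan is to establish, in turn, that $G_u$ (and, symmetrically, $G_v$) is cubic, is bridgeless, and has oddness at most $2$; the first two are routine and the last is the substantial part. \emph{Cubic:} each $u_i$ exchanges its incident cut edge $u_iv_i$ for the new edge $u_ix_u$ and so keeps degree $3$, while $x_u$ has degree $3$ by construction and all other vertices are untouched (if two of $u_1,u_2,u_3$ coincide, $G_u$ gains a pair of parallel edges at $x_u$, which is permitted, and non-triviality of $C$ keeps $x_u$ from absorbing a whole side). \emph{Bridgeless:} if $G_u$ had a bridge $e$, let $A$ be the side of $G_u-e$ avoiding $x_u$, so $A\subseteq V(G)$ and $e$ is the unique edge of $G_u$ leaving $A$; since edges of $G_u$ inside the $u$-side coincide with edges of $G$ and each edge $u_ix_u$ corresponds to the cut edge $u_iv_i$, reading the picture back in $G$ shows that $A$ is separated from the rest of $G$ by at most one edge, contradicting $3$-edge-connectivity.

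For the oddness, fix a $2$-factor $F$ of $G$ with exactly two odd cycles, put $M=E(G)\setminus F$, and let $U$ be the vertex set of the $u$-side of $G-C$; summing the two ends of each edge of $F$ over $U$ shows $|E(F)\cap C|\in\{0,2\}$. Throughout we use that a cubic graph has even oddness. If $|E(F)\cap C|=2$, say $u_1v_1,u_2v_2\in F$ and $u_3v_3\in M$, then exactly one cycle $Z$ of $F$ crosses $C$ and uses both $u_1v_1$ and $u_2v_2$; write $Z=P_U\cup\{u_1v_1\}\cup P_V\cup\{u_2v_2\}$, with $P_U$ the $u_1$--$u_2$ subpath of $Z$ inside $U$ and $P_V$ the $v_1$--$v_2$ subpath inside $V$. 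Then the cycles of $F$ contained in $U$ together with $Z_u:=P_U\cup\{u_1x_u,u_2x_u\}$ form a $2$-factor of $G_u$ whose odd cycles are the odd cycles of $F$ inside $U$ (at most two) and possibly $Z_u$, hence at most three; so $\omega(G_u)\le 2$, and replacing $Z_u$ by $Z_v:=P_V\cup\{v_1x_v,v_2x_v\}$ gives $\omega(G_v)\le 2$ as well.

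The hard case is $E(F)\cap C=\varnothing$, i.e.\ $C\subseteq M$. Then $F\cap G[U]$ is a $2$-factor of $G[U]$, and as $|U|$ is odd it has exactly one odd cycle. The matching $M\cap G[U]$ of $G[U]$ misses precisely $u_1,u_2,u_3$, but it is not of maximum size: by Petersen's theorem~\cite{Pet1891} the bridgeless cubic graph $G_u$ has a perfect matching, which uses exactly one edge at $x_u$ and thus restricts to a near-perfect matching of $G[U]$ missing a single one of $u_1,u_2,u_3$. By Berge's theorem there is therefore an $(M\cap G[U])$-augmenting path $P$ joining two of $u_1,u_2,u_3$, say $u_1$ and $u_2$; take such a $P$ of minimum length. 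It is $F$/$M$-alternating, begins and ends with $F$-edges, and $F_u:=\bigl((F\cap G[U])\cup\{u_1x_u,u_2x_u\}\bigr)\triangle P$ is a $2$-factor of $G_u$: the symmetric difference along $P$ leaves every degree equal to $2$, and closing the surviving $u_1$--$u_2$ path through $x_u$ makes it a cycle. This symmetric difference only re-links the cycles of $F\cap G[U]$ met by $P$, and by minimality of $P$ the re-linking produces at most one odd cycle beyond the one already present inside $U$; hence $F_u$ has at most three odd cycles and $\omega(G_u)\le 2$. Running the same argument inside $G[V]$ gives $\omega(G_v)\le 2$, completing the proof.

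The delicate point — where essentially all the work lies — is the oddness bookkeeping in this last case: showing that a minimum-length augmenting path $P$ re-links the cycles of $F\cap G[U]$ while creating at most one extra odd cycle requires controlling how the $F$-edges of $P$ are distributed among those cycles and treating the configurations where $P$ is forced back onto the cycle carrying the third special vertex. The short degenerate cases, such as two of $u_1,u_2,u_3$ being adjacent in $G$ or linked by a single $M$-edge through a common neighbour, can be verified directly.
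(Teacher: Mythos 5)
Your reduction to the two cases $|E(F)\cap C|\in\{0,2\}$, the parity observation that a $2$-factor of a graph of even order has an even number of odd cycles (so ``at most three'' upgrades to ``at most two''), and the whole case $|E(F)\cap C|=2$ are correct and essentially coincide with the paper's treatment. (Minor remark: the paper first notes that $3$-edge-connectivity forces $u_1,\dots,v_3$ to be pairwise distinct, so your parallel-edge degeneracy at $x_u$ never arises; and the paper does not bother proving cubicity or bridgelessness, which you handle correctly.) The divergence is in the case $C\subseteq M$. There the paper works with a proper $4$-edge-colouring of $G$ in which $M$ gets colour $1$ and the $2$-factor gets colours $2,3,4$ with colour $4$ used exactly once on each odd cycle; it then repairs the induced colouring of $G_u$ either by a Kempe swap along a $(1,3)$-cycle through $x_u$ or by a local recolouring at $x_u$, ending with a proper colouring whose complement of a perfect-matching colour class is a $2$-factor containing only two edges of colour $4$. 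The whole point of that device is that any cycle of this $2$-factor avoiding colour $4$ alternates two colours and is automatically even, so the number of odd cycles is bounded by the number of colour-$4$ edges with no cycle-by-cycle bookkeeping.

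Your augmenting-path substitute has a genuine gap at exactly the step you flag yourself. The assertion that, for a minimum-length $(M\cap G[U])$-augmenting path $P$, the $2$-factor $\bigl((F\cap G[U])\cup\{u_1x_u,u_2x_u\}\bigr)\triangle P$ has at most three odd cycles is not proved, and it is not clear that minimality of length is even the right hypothesis. Since $P$ alternates between $F$-edges and $M$-edges and begins and ends with $F$-edges, its $F$-edges are isolated along $P$; a cycle $Z$ of $F\cap G[U]$ met by $k$ of them is cut into $k$ arcs, and the $M$-edges of $P$ together with the two edges at $x_u$ reassemble all the arcs from all the visited cycles into a $2$-regular graph whose number of components and their parities you do not control. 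Shortness of $P$ bounds neither the number of returns to a single cycle nor the parities of the resulting pieces, so ruling out four or more odd cycles in $F_u$ requires a real argument that is absent. By your own admission this is ``where essentially all the work lies,'' so the proof is incomplete as written. To repair it you would either need a structural lemma about carefully chosen (not merely shortest) augmenting paths, or you could import the paper's mechanism: realize the surgery as a recolouring of a proper $4$-edge-colouring with only two edges of the rarest colour, which bounds the odd cycles for free.
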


\begin{proof}
	First, note that if any pair of the edges of $C$ have a common endvertex, then there is a 2-edge-cut in $G$, contradicting 3-edge-connectivity of $G$.
	Thus, the vertices $u_1,\dots,v_3$ are pairwise distinct.

	Let $G$, $G_u$, and $G_v$ be graphs as defined in the lemma.
	Let $F$ be a 2-factor of $G$ with two odd cycles and $M = G \setminus E(F)$ the corresponding perfect matching.	
	Clearly, either one or three edges of $C$ are included in $M$,
	and in the former case, the other two edges belong to the same cycle of $F$.

	\medskip
	\noindent {\bf Case 1:} \quad
	Suppose first that all three edges of $C$ are in $M$.
	Hence, the two odd cycles of $F$ either both appear in one of $G_u$ and $G_v$, or one appears in $G_u$ and the other in $G_v$.
	We consider the two subcases separately.

	Suppose that both odd cycles are in one of $G_u$ and $G_v$;
	without loss of generality, 
	we may assume that $G_u$ contains no odd cycle of $F$ and $G_v$ contains both of them.
	So, all vertices of $G_u$ except $x_u$ lie on even cycles, and thus $G_u$ has an odd number of vertices,
	which is not possible since $G$ is cubic.
	Therefore, both $G_u$ and $G_v$ contain exactly one odd cycle of $F$.

	Next, we color the edges of $G$ so that the edges of $M$ receive color $1$, while the edges of $F$ receive colors $2$, $3$, and $4$.
	Moreover, we use color $4$ at most two times---specifically, on one edge in each of the two odd cycles of $F$. We denote this coloring by $\varphi$.
	Hence, in the coloring $\varphi_u$ of $G_u$ induced by $\varphi$, the edges $u_1x_u$, $u_2x_u$, and $u_3x_u$ receive color $1$ 
	(note that $\varphi_u$ is not proper yet).
	We now use $\varphi_u$ to construct a $2$-factor $F_u$ of $G_u$ that contains two odd cycles.

	First, suppose that none of the vertices $u_1, u_2, u_3$ is incident with an edge of color $4$.
	That is, all of them are incident only with edges of colors $1$, $2$, and $3$.
	Since there is only one vertex in $G_u$ which is not incident with edges of colors $1$ and $3$, 
	there exists a cycle $K = x_uu_i\dots u_jx_u$ such that $\{i,j\}\subset \{1, 2, 3\}$ and $\varphi_u(e) \in \{1, 3\}$ for every $e\in E(K)$.
	We swap the colors $1$ and $3$ along the edges of $K$.
	Now, two edges incident with $x_u$ have color $3$ and one of them has color $1$.
	We recolor one of the edges colored with $3$ to $4$.
	The coloring $\varphi_u$ modified like this is proper, and every vertex of $G_u$ is incident with an edge of color $1$, 
	which means that the edges of color $1$ form a perfect matching in $G_u$.
	Moreover, all the other edges of $G_u$ are colored with $2$, $3$, or $4$, while only two edges have color $4$.
	Since $G_u$ is cubic, it follows that it has at most two odd cycles.

	Second, we may assume that, without loss of generality, $u_1$ is incident with edges of colors $1$, $3$, and $4$.
	Moreover, at least one of the vertices $u_2$ and $u_3$, say $u_2$, is not incident with an edge of color $4$.
	We change the color of the edge $x_uu_1$  from $1$ to $2$, and the color of the edge $x_uu_2$  from $1$ to $4$.
	The modified coloring $\varphi_u$ is proper, and every vertex of $G_u$ is incident with an edge of color $2$. 
	The edges with color $2$ now form a perfect matching $M_u$ of $G_u$.
	Furthermore, only two edges of $\varphi_u$ received color $4$, which implies that there exists a 2-factor $F_u$ of $G_u$ 
	that contains at most two odd cycles, consisting of edges with colors $1$, $3$, and $4$.
	By an analogous argument, the same holds for $G_v$.

	\medskip
	\noindent {\bf Case 2:} \quad	
	Suppose that only one edge of $C$ is in $M$.
	Note that the two edges of $C$ belonging to $F$ are in the same cycle, we denote this cycle as $D$.
	Also, we denote as $D_u$ the cycle of $G_u$ corresponding to the cycle $D$ in $G$ (similarly $D_v$).

	If both $G_u$ and $G_v$ contain at most two odd cycles in 2-factors corresponding to $F$ in $G$, then we are done.
	So suppose that, without loss of generality, $G_u$ contains three odd cycles in the $2$-factor $F_u$ corresponding to $F$ 
	(it is obvious that more than three cannot arise).
	This means that $G_u$ has a $2$-factor with an odd number of odd cycles, a contradiction.
\end{proof}

\section{Proof of Theorem~\ref{thm:main}}
\label{sec:main}

In this section, we prove Theorem~\ref{thm:main}.

\begin{proof}[Proof of Theorem~\ref{thm:main}]	
	Let $G$ be a minimal counterexample to the theorem; 
	i.e., a bridgeless cubic graph with oddness $2$ and the minimum number of vertices
	such that it does not admit a normal $6$-edge-coloring.	
	Hence $G$ is not isomorphic to the Petersen graph,
	which admits a normal $5$-edge-coloring.
	Moreover, by the minimality, $G$ is connected.
	
	We first determine several properties of $G$.	
	\begin{claim}
		\label{cl:multi}
		$G$ does not contain any multiedge.
	\end{claim}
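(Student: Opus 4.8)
The plan is to argue by contradiction using the minimality of $G$. Suppose $G$ contains a multiedge. It cannot be a triple edge: a connected cubic multigraph in which two vertices are joined by three parallel edges is the dipole on two vertices, whose only $2$-factor is a pair of parallel edges (an even cycle), so such a graph has oddness $0$, contradicting $\omega(G)=2$. Hence $G$ has a pair of parallel edges $e_1,e_2$ joining two vertices $u$ and $v$; let $uu'$ and $vv'$ be the remaining edges at $u$ and $v$. Since $G$ has no triple edge and, being bridgeless, no loop, the vertices $u,v,u'$ are pairwise distinct and so are $u,v,v'$; moreover $u'\ne v'$, since otherwise the third edge at $u'$ would separate $\set{u,v,u'}$ from the rest of $G$, i.e.\ it would be a bridge. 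Thus $u,v,u',v'$ are four distinct vertices.

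Let $G'$ be obtained from $G$ by deleting $u$ and $v$ together with all edges incident to them and adding a new edge $u'v'$ (so $G'$ may be a multigraph, which is harmless). Then $G'$ is cubic with $|V(G')|=|V(G)|-2$. I would first check that $G'$ is bridgeless: if some edge $f$ of $G'$ were a bridge, then either $f=u'v'$, and then $uu'$ would be a bridge of $G$; or $f\ne u'v'$, and then $u'$ and $v'$ lie on the same side of $f$ in $G'$, so the edge of $G$ corresponding to $f$ would be a bridge of $G$ — both cases contradict the hypotheses. Next I would check $\omega(G')=2$. Starting from a $2$-factor $F$ of $G$ with exactly two odd cycles, in $F$ either both $e_1$ and $e_2$ are used (forming an even $2$-cycle) or exactly the path $u'uvv'$ is used; deleting the $2$-cycle in the first case, or replacing the path $u'uvv'$ by the edge $u'v'$ in the second, yields a $2$-factor of $G'$ with the same number of odd cycles, so $\omega(G')\le 2$. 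Conversely, a $2$-factor of $G'$ lifts back to one of $G$ with the same parities (replace $u'v'$ by the path $u'uvv'$, or add the $2$-cycle $e_1,e_2$ through $u$ and $v$, according as $u'v'$ is used or not), so $\omega(G)\le\omega(G')$; hence $\omega(G')=2$, and by minimality of $G$ the graph $G'$ admits a normal $6$-edge-coloring $\varphi'$.

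It then remains to extend $\varphi'$ to a normal $6$-edge-coloring of $G$, contradicting the choice of $G$. Put $\alpha=\varphi'(u'v')$ and let $\delta_1,\delta_2$ be the colors of the other two edges at $u'$ under $\varphi'$; they are distinct and different from $\alpha$. Define $\varphi$ to agree with $\varphi'$ on all edges belonging to both $G$ and $G'$, and set $\varphi(uu')=\varphi(vv')=\alpha$, $\varphi(e_1)=\delta_1$, and $\varphi(e_2)=\delta_2$. Properness is immediate (at $u$ and at $v$ the three colors are $\alpha,\delta_1,\delta_2$, and at $u'$ and $v'$ nothing relevant changed). For normality: every edge of $G$ not incident with $\set{u,v}$ has the same set of colors on its adjacent edges as it did under $\varphi'$, because an edge incident with $u'$ (respectively $v'$) now sees $uu'$ (respectively $vv'$) of color $\alpha$ in place of $u'v'$ of color $\alpha$, so all of these edges are normal. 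Among the four new edges, $e_1$ and $e_2$ are poor (each is adjacent only to edges colored $\alpha$ and one of $\delta_1,\delta_2$), the edge $uu'$ is poor (adjacent only to edges colored $\delta_1$ and $\delta_2$), and $vv'$ is adjacent exactly to the edges colored $\delta_1,\delta_2$ at $v$ and to the two edges at $v'$ other than $vv'$ — which carry exactly the colors that $u'v'$ saw under $\varphi'$, so $vv'$ is poor or rich just as $u'v'$ was normal in $G'$.

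The bridgelessness and oddness bookkeeping for $G'$ are routine; the one step that needs genuine care is the choice of colors on $e_1$ and $e_2$, where taking them to be the two colors appearing at $u'$ is precisely what makes $uu'$ poor while letting $vv'$ inherit the normality of $u'v'$. I would also keep an eye on the case where $u'$ and $v'$ are already adjacent in $G$, so that adding $u'v'$ creates a parallel edge in $G'$; this causes no difficulty, since $G'$ is permitted to be a multigraph and the minimality hypothesis applies to it all the same.
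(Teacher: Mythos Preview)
Your proof is correct and follows essentially the same approach as the paper: reduce by suppressing the digon to a smaller graph $G'$, verify it is still bridgeless cubic with oddness at most $2$, and extend a normal $6$-edge-coloring of $G'$ by giving both $uu'$ and $vv'$ the color of $u'v'$ and the two parallel edges the two remaining colors at $u'$. Your write-up is in fact more thorough than the paper's (you explicitly verify bridgelessness of $G'$ and check normality edge by edge); the only superfluous step is proving $\omega(G')=2$ rather than $\omega(G')\le 2$, since the minimality hypothesis already covers the oddness-$0$ case.
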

	
	\begin{proofclaim}
		First, consider the trivial case when
		$G$ is a tripole, i.e., a graph with two vertices connected with three edges.
		Then $G$ is in class I and thus normal $3$-edge-colorable.
		
		Suppose now that there is a pair of vertices $u$ and $v$ connected with two edges.
		Let $u'$ and $v'$ be the other neighbor of $u$ and $v$, respectively.
		Note that $u'\neq v'$, otherwise $G$ would have a bridge.
		Let $G'$ be the graph obtained from $G$ by removing $u$ and $v$
		and connecting $u'$ and $v'$ (note that a multiedge may appear).
		
		We first show that $G'$ also has oddness at most $2$.
		Let $F$ be a $2$-factor of $G$ with two odd cycles and $M$ the matching $E(G)\setminus E(F)$.		
		If the digon between $u$ and $v$ in $G$ is a cycle in $F$,
		then $uu'$ and $vv'$ are in $M$ and there is a $2$-factor in $G'$
		comprised of all cycles from $F$ except the digon $uv$.
		Otherwise, one of the digon edges is in $M$ and the other, 
		together with the edges $uu'$ and $vv'$ are consecutive edges of some cycle $C$ in $F$.
		In this case, there is a $2$-factor in $G'$ comprised of all cycles from $F$
		with the cycle $C$ shortened by two edges (the three consecutive edges being replaced by $u'v'$),
		and hence the new cycle has the length of the same parity as $C$. 
		It follows that there is a $2$-factor in $G$ with two odd cycles.
		
		Therefore, by the minimality of $G$,
		there is a normal $6$-edge-coloring $\varphi'$ in $G'$.
		Let $\varphi$ be a partial normal $6$-edge-coloring of $G$ induced by $\varphi'$
		with the edges incident to $u$ and $v$ being noncolored.
		Then, we can extend the coloring to all the edges of $G$
		by setting $\varphi(uu') = \varphi(vv') = \varphi'(u'v')$
		and coloring the two edges incident with $u$ and $v$ 
		with the two colors $C(u')\setminus \set{\varphi'(u'v')}$.
	\end{proofclaim}

	\begin{claim}
		\label{cl:2cut}
		$G$ does not contain any $2$-edge-cut.
	\end{claim}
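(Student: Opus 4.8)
The plan is to mimic the proof of \cref{cl:multi}: assume $G$ has a $2$-edge-cut $C=\set{e_1,e_2}=\set{u_1v_1,u_2v_2}$, split $G$ along $C$ into two smaller graphs, argue each is a bridgeless cubic graph with oddness at most $2$, invoke minimality to colour each, and glue the colourings back together. Concretely, let $G_u$ be the component of $G-C$ containing $u_1,u_2$ with a new vertex $x_u$ joined to both (a double edge $x_uu_1$, $x_uu_2$ would only occur in a degenerate case which can be handled directly, or better: note $u_1\ne u_2$ since otherwise $u_1$ would have degree $2$ after removing $e_1,e_2$, so actually we should add $x_u$ adjacent to $u_1$ and $u_2$ and give $x_u$ a loop — instead the cleaner route is to suppress the cut: replace the two cut edges plus the "$H$-side" by a single edge, i.e. form $G_u$ by taking the $u$-side and adding the edge $u_1u_2$, and symmetrically $G_v$ by adding $v_1v_2$). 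First I would check $u_1\ne u_2$ (else $G$ has a bridge or a vertex of too-small degree) and that $G_u,G_v$ are cubic; bridgelessness of $G_u$ follows because a bridge in $G_u$ not equal to $u_1u_2$ would be a bridge in $G$, and $u_1u_2$ itself cannot be a bridge of $G_u$ since $G_u$ has a cycle through it coming from a $u_1$--$u_2$ path in $G$ (the two cut edges being in a common cycle when they are not both in the matching — this needs the oddness-$2$ structure).

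Next I would handle the oddness. Take a $2$-factor $F$ of $G$ with exactly two odd cycles and $M=E(G)\setminus E(F)$. A $2$-edge-cut meets any $2$-factor in an even number of edges, so either both $e_1,e_2\in M$ or both $e_1,e_2\in E(F)$; in the latter case they lie on a common cycle $D$ of $F$ passing through both sides of the cut. In the case $e_1,e_2\in M$, the cycles of $F$ partition between the two sides, and by a parity count (each side minus its new vertex is covered by cycles, and the new vertex $x_u$ must sit on one of the added structures) I would show the two odd cycles cannot both fall on the same side — so each of $G_u$, $G_v$ inherits exactly one odd cycle of $F$ together with the matching edge $u_1u_2$ (resp. $v_1v_2$), giving a $2$-factor with at most two odd cycles; if a side ends up with an odd number of odd cycles that contradicts cubicity, exactly as in the proofs of \cref{lem:3reduce} and \cref{lem:3cut-reduce}. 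In the case $e_1,e_2\in E(F)$, the cycle $D$ splits into a $u_1$--$u_2$ path on the $u$-side and a $v_1$--$v_2$ path on the $v$-side; closing each with the new edge $u_1u_2$ (resp.\ $v_1v_2$) produces cycles $D_u,D_v$ whose lengths have well-controlled parities ($|D_u|+|D_v|=|D|+2$, and $|D|$ has fixed parity), so one checks each side has at most two odd cycles, again using the cubicity parity obstruction to rule out the odd-count case.

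With $G_u$ and $G_v$ established as bridgeless cubic graphs of oddness at most $2$ on fewer vertices, minimality gives normal $6$-edge-colourings $\varphi_u$ of $G_u$ and $\varphi_v$ of $G_v$. Up to permuting colours I may assume $\varphi_u(u_1u_2)=\varphi_v(v_1v_2)=1$. Now define $\varphi$ on $G$ by keeping $\varphi_u$ on the $u$-side, $\varphi_v$ on the $v$-side, and setting $\varphi(e_1)=\varphi(e_2)=1$; this is a proper edge-colouring since at $u_1$ the colour $1$ replaces the colour $1$ of the deleted edge $u_1u_2$, and likewise at $u_2,v_1,v_2$. For normality: every edge strictly inside the $u$-side keeps exactly the same set of colours on its neighbouring edges as in $\varphi_u$ (the neighbourhood of the edge $u_1u_2$ in $G_u$ is reproduced by the neighbourhood of $e_1$ at $u_1$ plus the neighbourhood of $e_2$ at $u_2$ — here one has to be slightly careful because in $G_u$ the edges at $u_1$ and at $u_2$ other than $u_1u_2$ are all neighbours of the single edge $u_1u_2$, whereas in $G$ the two edges at $u_1$ other than $e_1$ neighbour $e_1$ and the two at $u_2$ other than $e_2$ neighbour $e_2$), so their rich/poor status is unchanged, and symmetrically on the $v$-side. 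The only genuinely new edges to check are $e_1$ and $e_2$: the set of colours adjacent to $e_1$ is $\bigl(C_{\varphi_u}(u_1)\setminus\set1\bigr)\cup\bigl(C_{\varphi_v}(v_1)\setminus\set1\bigr)$, and I must show this has size $2$ or $4$. The main obstacle is exactly this last verification — it is not automatic, and this is presumably where a short argument (or a recolouring step adjusting $\varphi_u$ near $u_1$ and $\varphi_v$ near $v_1$, exploiting the freedom of which two colours appear at those vertices) is needed; I would argue that since $u_1u_2$ is poor or rich in $G_u$ one controls $|C_{\varphi_u}(u_1)\setminus\set1|\in\set{1,2}$ with the analogous statement at $v_1$, and then combine, handling the few combinations so that the union lands in $\set{2,4}$, possibly after swapping along a suitable Kempe chain to force agreement or disagreement of the two pairs as required; the same reasoning applied at $e_2$ completes the contradiction with $G$ being a counterexample.
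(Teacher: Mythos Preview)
Your decomposition is exactly the paper's: split along $\{e_1,e_2\}$, close each side with a new edge $u_1u_2$ (resp.\ $v_1v_2$), check bridgelessness and oddness $\le 2$, invoke minimality, and glue. The oddness argument is also the same in spirit, though the paper is more efficient: in the case $e_1,e_2\in M$ it simply observes that $F$ restricts to a $2$-factor on each side with at most two odd cycles (there is no need to argue that the two odd cycles land on different sides --- they may not, and it does not matter); in the case $e_1,e_2\in E(F)$ it uses the parity trick you describe.

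The genuine gap is the gluing step, which you yourself flag as ``the main obstacle'' without resolving. First, a small slip: $|C_{\varphi_u}(u_1)\setminus\{1\}|$ is always $2$, never $1$, since $u_1$ has degree $3$ in the cubic graph $G_u$. So the issue is precisely that the union of two $2$-sets can have size $3$. The paper's fix is not a Kempe-chain argument but a straightforward permutation of colours on the $v$-side: permute $\varphi_v$ so that (i) $\varphi_v(v_1v_2)=\varphi_u(u_1u_2)$, (ii) the two remaining colours at $v_1$ equal the two remaining colours at $u_1$, and (iii) if moreover \emph{both} $u_1u_2$ and $v_1v_2$ are rich in their respective graphs, then also the two remaining colours at $v_2$ equal those at $u_2$. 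Step (ii) forces $e_1$ to be poor. For $e_2$: if $u_1u_2$ is poor then the pair at $u_2$ equals the pair at $u_1$; if rich, it is disjoint from it --- and likewise on the $v$-side. Hence poor/poor gives $e_2$ poor, rich/rich (after step (iii)) gives $e_2$ poor, and mixed gives two disjoint pairs, so $e_2$ is rich. Step (iii) is feasible because in the rich/rich case the pair at $v_2$ lies in the three colours not used by $\{1\}\cup C_{\varphi_v}(v_1)$, and those three colours are still free to permute after (i) and (ii). This replaces your unspecified Kempe manoeuvre and closes the proof.
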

	
	\begin{proofclaim}
		Suppose the contrary and let $\set{e_1,e_2}$ be a $2$-edge-cut in $G$.
		Clearly, $e_1$ and $e_2$ are not incident, otherwise $G$ would have a bridge.
				
		Next, let $G_1$ and $G_2$ be the components of $G \setminus \set{e_1,e_2}$ 
		and denote by $u_1$ and $u_2$ ($v_1$ and $v_2$) the $2$-vertices in $G_1$ ($G_2$) 
		incident with $e_1$ and $e_2$, respectively.
		Let $G_1' = G_1 \cup \set{u_1u_2}$ and $G_2' = G_2 \cup \set{v_1v_2}$.
		Note that $G_1'$ and $G_2'$ are bridgeless cubic graphs. 
		We will first show that they also have oddness at most $2$.
		
		Let $F$ be a $2$-factor of $G$ with two odd cycles and $M$ the corresponding matching.
		Observe that the edges $e_1$ and $e_2$ are either the matching edges or they appear on the same cycle in $F$.
		In the former case, $F$ induces a $2$-factor in $G_1'$ and $G_2'$,
		and thus each of them has oddness at most $2$.
		Therefore, we may assume that $e_1$ and $e_2$ appear on a cycle $C$ of $F$.
		Again, observe that  $F$ together with the new edges $u_1u_2$ and $v_1v_2$ induces a $2$-factor in $G_1'$ and $G_2'$, respectively.		
		Moreover, the cycle $C$ splits into two cycles, one, call it $C_1$, in $G_1'$ and the other, call it $C_2$, in $G_2'$.
		Since the $2$-factor in $G_1'$ without $C_1$ contains at most two odd cycles, together with $C_1$,
		it contains at most three.
		Since the number of odd cycles in any $2$-factor of a cubic graph is even, it means that $G_1'$
		contains at most two and hence $\omega(G_1') \le 2$. 
		Analogously, we deduce that $\omega(G_2') \le 2$.
		
		Therefore, by the minimality of $G$,
		there are normal $6$-edge-colorings $\varphi_1'$ and $\varphi_2'$ of $G_1'$ and $G_2'$, respectively.
		Now, we permute the colors in $\varphi_2'$ such that the color of $v_1v_2$ is the same as the color of $u_1u_2$,
		and that the other two colors at $v_1$ match the other two colors at $u_1$.
		Finally, if both edges $u_1u_2$ and $v_1v_2$ are rich, then we also permute the colors so that
		the colors at $u_2$ and $v_2$ match.		
		The colorings $\varphi_1'$ and $\varphi_2'$ induce a partial $6$-edge-coloring $\varphi$ of $G$,
		and we complete it with assigning the color $\varphi_1'(u_1u_2)$ to the edges $e_1$ and $e_2$,
		a contradiction.
	\end{proofclaim}

	\begin{claim}
		\label{cl:3cyc}
		$G$ does not contain any $3$-cycle.
	\end{claim}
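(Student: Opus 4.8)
The plan is to apply the Triangle Reduction Lemma and argue by minimality. Suppose, for a contradiction, that $G$ contains a $3$-cycle $T = uvw$, and let $x_1$, $x_2$, $x_3$ be the neighbors of $u$, $v$, $w$, respectively, outside $T$. Since $G$ is $3$-edge-connected by Claims~\ref{cl:multi} and~\ref{cl:2cut}, the vertices $x_1$, $x_2$, $x_3$ are pairwise distinct (otherwise $\set{u,v,w}$ together with the repeated neighbor would be separated from the rest of $G$ by only two edges), so the $\Delta$-reduction $G'$ of $G$ by $T$ is a well-defined simple bridgeless cubic graph, and by Lemma~\ref{lem:3reduce} it satisfies $\omega(G')=2$. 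As $|V(G')| = |V(G)| - 2$, the minimality of $G$ yields a normal $6$-edge-coloring $\varphi'$ of $G'$.

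Next I would lift $\varphi'$ to a normal $6$-edge-coloring of $G$, which will be the desired contradiction. Let $x$ be the vertex of $G'$ obtained by contracting $T$, and let $a$, $b$, $c$ be the three (pairwise distinct) colors assigned by $\varphi'$ to the edges $xx_1$, $xx_2$, $xx_3$; these edges are precisely the edges $e_u = ux_1$, $e_v = vx_2$, $e_w = wx_3$ of $G$. Define $\varphi$ on $G$ by setting $\varphi(e) = \varphi'(e)$ for every edge of $G$ not in $T$ (so $\varphi(e_u)=a$, $\varphi(e_v)=b$, $\varphi(e_w)=c$) and $\varphi(uv)=c$, $\varphi(vw)=a$, $\varphi(uw)=b$. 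Each of $u$, $v$, $w$ is then incident with exactly one edge of each of the colors $a$, $b$, $c$, so $\varphi$ is proper, and no vertex outside $T$ sees a changed set of colors.

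It remains to verify normality. Each of the three edges of $T$ has only two distinct colors among its four neighbors---for example $uv$, of color $c$, is adjacent to the edges $e_u, uw$ of colors $a, b$ at $u$ and to the edges $e_v, vw$ of colors $b, a$ at $v$---and is therefore poor. For every other edge $f$ of $G$ I claim that the multiset of colors of the edges adjacent to $f$ coincides with the corresponding multiset in $G'$ under $\varphi'$: for $e_u$ this holds because its two neighbors at $u$ now carry the colors $b, c$, which are exactly the colors carried in $G'$ by the two edges at $x$ other than $xx_1$, and symmetrically for $e_v$ and $e_w$; for an edge incident with one of $x_1, x_2, x_3$ the only adjacent edge whose endpoint was affected is the corresponding pendant edge, whose color did not change; and every edge disjoint from $T \cup \set{x_1, x_2, x_3}$ has an unchanged neighborhood. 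Hence each such $f$ is poor (respectively rich) under $\varphi$ if and only if it was under $\varphi'$, so $\varphi$ is a normal $6$-edge-coloring of $G$, a contradiction.

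The argument is short; the only point requiring some care is the bookkeeping in the last paragraph showing that $\Delta$-expanding the vertex $x$ into the triangle $T$ leaves the adjacency-color multiset of every surviving edge---and in particular of the three pendant edges---unchanged, which is exactly what lets normality be inherited from $\varphi'$.
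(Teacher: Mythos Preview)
Your proof is correct and follows essentially the same approach as the paper: apply the $\Delta$-reduction, invoke Lemma~\ref{lem:3reduce} and minimality to obtain a normal $6$-edge-coloring of $G'$, and then color the three edges of $T$ with the three colors appearing at the contracted vertex so that every vertex of $T$ sees the same triple $\{a,b,c\}$. The paper's version is terser, merely noting that the pendant edges were already mutually adjacent in $G'$, whereas you spell out explicitly that the triangle edges become poor and that every surviving edge keeps its adjacency-color multiset; but the underlying argument is identical.
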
	

	\begin{proofclaim}
		Suppose, to the contrary, that $G$ contains a $3$-cycle $T$.
		Let $G'$ be the graph obtained from the $\Delta$-reduction of $G$ by $T$
		and let $x$ be the vertex to which the vertices of $T$ collapse in $G'$.
		By Lemma~\ref{lem:3reduce}, $G'$ has oddness $2$ and by the minimality,
		it admits a normal $6$-edge-coloring $\varphi'$.
		Let $\varphi$ be the partial $6$-edge-coloring of $G$ induced by $\varphi'$.
		In order to complete $\varphi$ to a normal $6$-edge-coloring of $G$, 
		we only need to color the edges of $T$.
		Note that $|C(x)| = 3$ in $\varphi'$ and therefore the three edges incident to $T$
		and not on $T$ have distinct colors, say $1$, $2$, and $3$.
		Moreover, in $\varphi'$, each of the three edges was adjacent to the other two,
		and so we can color also the edges of $T$ with the colors $1$, $2$, and $3$,
		fulfilling the conditions of the normal coloring.
		In this way, $\varphi$ becomes a normal coloring of $G$, a contradiction.
	\end{proofclaim}

	So far, we know that $G$ is a connected, simple, $3$-edge-connected, triangle-free cubic graph with oddness $2$.
	We now proceed to establish a property concerning special $3$-edge-cuts.

	\begin{claim}
		\label{cl:special3cut}
		In $G$, there is no $3$-edge-cut $C = \set{u_1v_1,u_2v_2,u_3v_3}$
		such that $v_1v_2 \in E(G)$ and that there exists a vertex $v_4$ 
		such that $v_2v_4,v_3v_4 \in E(G)$.
	\end{claim}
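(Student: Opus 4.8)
The plan is to argue by contradiction: suppose such a cut $C=\set{u_1v_1,u_2v_2,u_3v_3}$ and vertex $v_4$ exist. First I would set up notation. Since $G$ is triangle-free, $v_2\neq v_4$, the neighbors of $v_2$ are exactly $u_2,v_1,v_4$, and writing $a,b,c$ for the remaining neighbors of $v_1,v_3,v_4$ respectively, a short argument (using that $G$ is simple, $3$-edge-connected and triangle-free) shows that $u_1,u_2,u_3,v_1,v_2,v_3,v_4$ are pairwise distinct, that $a,b,c$ all lie on the $v$-side of $C$ (the component of $G-C$ meeting $\set{v_1,v_2,v_3}$), and that both sides of $C$ are non-trivial. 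The finitely many degenerate subcases---where some of $a,b,c$ coincide or equal a vertex of the ``gadget'' $\set{v_1,v_2,v_3,v_4}$---I would dispose of separately; the reductions below still apply, possibly producing multigraphs, which is harmless exactly as in Claim~\ref{cl:multi}.

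The heart of the argument is that the gadget, viewed from the $v$-side, is an expansion of a single vertex. By Lemma~\ref{lem:3cut-reduce} applied to $C$ we obtain bridgeless cubic graphs $G_u$ (with a new vertex $x_u$ joined to $u_1,u_2,u_3$) and $G_v$ (with $x_v$ joined to $v_1,v_2,v_3$), both of oddness at most $2$ and both with fewer vertices than $G$; hence each has a normal $6$-edge-coloring (if oddness $0$, it is class~I; if oddness $2$, invoke minimality). Now $G_v$ contains the triangle $x_vv_1v_2$ since $v_1v_2\in E(G)$; a $\Delta$-reduction of $G_v$ by it yields a vertex $y_1$ adjacent to $v_3,a,v_4$, and since $v_3v_4\in E(G)$ this creates a triangle $y_1v_3v_4$. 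A further $\Delta$-reduction by that triangle yields a graph $G_v''$ in which $\set{x_v,v_1,v_2,v_3,v_4}$ is collapsed into one vertex $y_2$ adjacent to $a,b,c$. By the bridgelessness part of Lemma~\ref{lem:3reduce}, $G_v''$ is bridgeless cubic; two applications of its oddness part (or the observation that all these graphs are class~I when $\omega(G_v)=0$) give $\omega(G_v'')\leq 2$; and $G_v''$ is smaller than $G$, so it too admits a normal $6$-edge-coloring $\chi$.

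I would then lift $\chi$ back through the two $\Delta$-reductions exactly as in the proof of Claim~\ref{cl:3cyc}: when expanding a vertex into a triangle, color the triangle with the three colors of that vertex's edges so that every triangle edge becomes poor. The computation to check is that after such an expansion the three corners of the new triangle see precisely the color triple the expanded vertex saw, while every edge outside the triangle retains the set of colors on its neighbors; consequently the lifted coloring $\psi_v$ of $G_v$ is normal and, with $\set{\alpha,\beta,\gamma}$ denoting the colors at $y_2$ in $\chi$, satisfies $C_{\psi_v}(v_1)=C_{\psi_v}(v_2)=C_{\psi_v}(v_3)=C_{\psi_v}(x_v)=\set{\alpha,\beta,\gamma}$, with $\psi_v(v_1x_v),\psi_v(v_2x_v),\psi_v(v_3x_v)$ three distinct colors. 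This uniformity of the color sets at $v_1,v_2,v_3$ is what one $\Delta$-reduction alone does not deliver---hence the two reductions.

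Finally I would glue. Take a normal $6$-edge-coloring $\psi_u$ of $G_u$ and permute its colors so that $\psi_u(u_ix_u)=\psi_v(v_ix_v)$ for $i\in\set{1,2,3}$ (possible since each of the two stars uses three distinct colors), whence $C_{\psi_u}(x_u)=\set{\alpha,\beta,\gamma}$ as well. Color $G$ by $\psi_u$ on the $u$-side, $\psi_v$ on the $v$-side, and each $u_iv_i$ by the common color $\psi_u(u_ix_u)$; this is proper, since at $u_i$ the colors are $C_{\psi_u}(u_i)$ and at $v_i$ they are $C_{\psi_v}(v_i)$. Every edge other than $u_1v_1,u_2v_2,u_3v_3$ sees the same neighbor-colors as in $\psi_u$ or $\psi_v$ (its only modified neighbor is an edge at some $x_u$ or $x_v$, replaced by a $u_iv_i$ of the same color), hence stays normal; and for $u_iv_i$ of color $c$, the colors on its neighbors form $(C_{\psi_u}(u_i)\setminus\set{c})\cup(C_{\psi_v}(v_i)\setminus\set{c})$, which by $C_{\psi_v}(v_i)=C_{\psi_u}(x_u)$ is exactly the neighbor-color set of $u_ix_u$ in $\psi_u$---of size two or four, since $\psi_u$ is normal---so $u_iv_i$ is normal. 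This produces a normal $6$-edge-coloring of $G$, a contradiction. I expect the steps needing most care to be the degenerate-subcase bookkeeping and the verification that the double lift forces $C_{\psi_v}(v_i)=\set{\alpha,\beta,\gamma}$; the gluing itself is then essentially automatic.
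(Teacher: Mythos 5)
Your argument is correct in its essentials, but it reaches the contradiction by a genuinely different mechanism than the paper. The paper observes that the gadget $\set{v_1,v_2,v_3,v_4}$ sits between \emph{two} $3$-edge-cuts, namely $C$ and $C'=\set{v_1a,v_3b,v_4c}$ in your notation; it applies Lemma~\ref{lem:3cut-reduce} to both, obtains normal $6$-edge-colorings of the two outer pieces, and then colors the nine remaining gadget edges explicitly by hand (fixing in advance the colors of $\varphi_R$ at $a$, $b$, $c$). You instead absorb the gadget into $G_v$ and collapse it by two successive $\Delta$-reductions (triangles $x_vv_1v_2$ and then $y_1v_3v_4$), lift a normal coloring back through the expansions as in Claim~\ref{cl:3cyc}, and glue across the single cut $C$. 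What your route buys is a clean structural explanation of \emph{why} the configuration is reducible, and in particular the uniformity $C_{\psi_v}(v_1)=C_{\psi_v}(v_2)=C_{\psi_v}(v_3)=C_{\psi_v}(x_v)$, which makes the gluing across a $3$-edge-cut automatic (the neighbour-colour set of $u_iv_i$ coincides with that of $u_ix_u$ in $\psi_u$); the paper's route avoids the double lift at the cost of a figure-checking verification of nine hand-assigned colours. I verified your lift: with the ``opposite-edge'' rule each expanded triangle edge is poor, the corners all see the colour triple of the contracted vertex, and no edge outside the triangle changes its neighbour-colour set, so the gluing computation goes through.

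The one place needing more care than you give it is the degenerate bookkeeping, and your blanket remark that coincidences merely ``produce multigraphs'' is not quite accurate. The coincidences $a=b$ or $a=c$ indeed only create a double edge at $y_2$ in $G_v''$, which is harmless since the minimality ranges over multigraphs ($b=c$ is excluded by triangle-freeness via $v_3v_4$). But the coincidence $a=v_3$ (i.e.\ $v_1v_3\in E(G)$) does \emph{not} produce a multigraph: it makes the second contraction collapse a set with only one outgoing edge, yielding a degree-$1$ vertex, so the construction would break outright. You must rule this case out, and you can: if $v_1v_3\in E(G)$ then all edges leaving $\set{v_1,v_2,v_3,v_4}$ other than the three cut edges reduce to the single edge $v_4c$, so $v_4c$ is a bridge of $G_v$ (equivalently, $\set{v_4c}$ separates $G$ after removing $C$'s endpoints' side), contradicting the $3$-edge-connectivity guaranteed by Claims~\ref{cl:multi}--\ref{cl:2cut} and Lemma~\ref{lem:3cut-reduce}. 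Likewise $a=v_4$ is excluded by triangle-freeness. With these exclusions stated, your proof is complete.
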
	
	
	\begin{proofclaim}	
		Suppose the contrary and let $C$ be a $3$-edge-cut in $G$ 
		with the properties as described in the claim.
		We assume a labeling of relevant vertices as given in Figure~\ref{fig:special3cut}.
		\begin{figure}[htp!]
			$$
				\includegraphics{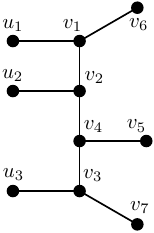}
			$$
			\caption{A $3$-edge-cut from Claim~\ref{cl:special3cut}.}
			\label{fig:special3cut}
		\end{figure}
				
		Let $G_L$ be the component of $G \setminus C$ containing $u_1$, $u_2$, and $u_3$,
		and with an additional vertex $x$ connected to $u_1$, $u_2$, and $u_3$.
		Hence, $G_L$ is cubic and, by Lemma~\ref{lem:3cut-reduce}, $G_L$ has oddness at most $2$.
		Therefore, by the minimality of $G$, 
		$G_L$ admits a normal $6$-edge-coloring $\varphi_L$;
		say $\varphi_L(u_1x) = 1$, $\varphi_L(u_2x) = 2$, and $\varphi_L(u_3x) = 3$.
		
		Now, note that $C' = \set{v_1v_6, v_4v_5, v_3v_7}$ is also a $3$-edge-cut of $G$.
		Let $G_R$ be the component of $G \setminus C'$ containing $v_5$, $v_6$, and $v_7$,
		and with an additional vertex $y$ connected to $v_5$, $v_6$, and $v_7$.
		Hence, also $G_R$ is cubic and, by Lemma~\ref{lem:3cut-reduce}, it has oddness at most $2$.
		Again, by the minimality of $G$, 
		$G_R$ admits a normal $6$-edge-coloring $\varphi_R$;
		without loss of generality, 
		we may assume that $\varphi_R(v_6y) = 2$, $\varphi_R(v_5y) = 3$, and $\varphi_R(v_7y) = 1$.
		
		Finally, we use the colorings $\varphi_L$ and $\varphi_R$ to induce 
		a partial normal $6$-edge-coloring $\varphi$ of $G$. 
		We complete the coloring $\varphi$ by setting
		$\varphi(u_1v_1) = 1$, $\varphi(u_2v_2) = 2$, $\varphi(u_3v_3) = 3$,
		$\varphi_R(v_6v_1) = 2$, $\varphi_R(v_5v_4) = 3$, $\varphi_R(v_7v_3) = 1$,
		$\varphi_R(v_1v_2) = 3$, $\varphi_R(v_2v_4) = 1$, and $\varphi_R(v_3v_4) = 2$ (see Figure~\ref{fig:special3cut2}).
		\begin{figure}[htp!]
			$$
				\includegraphics{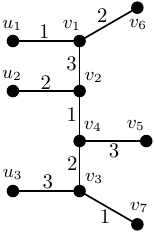}
			$$
			\caption{Coloring of the non-colored edges after inducing $\varphi$ from $\varphi_L$ and $\varphi_R$.}
			\label{fig:special3cut2}
		\end{figure}		
		It is easy to see that $\varphi$ is indeed a normal $6$-edge-coloring of $G$,
		a contradiction.
	\end{proofclaim}

	\bigskip
	Having established the above properties of $G$, we proceed with the second step.
	We construct a specific normal $6$-edge-coloring of $G$, and thus reach a contradiction.
	
	For a path $P = v_1\dots v_k$, we denote its subpath from a vertex $v_i$ to a vertex $v_j$,
	where $1 \le i \le j \le k$, by $[v_i,v_j]_P$, and refer to it as a {\em segment of $P$}.
	Sometimes, we also refer to a path $P$ by specifying only its first vertex $u$ and last vertex $v$;
	in such cases, we denote it by $[u,v]$.
	
	Given two disjoint cycles $C_1$ and $C_2$, we say that a path $P = [x,y]$ is a {\em $(C_1,C_2)$-path} 
	if $x$ is a vertex of $C_1$ (we call $x$ the {\em starting vertex}), 
	$y$ is a vertex of $C_2$ (we call $y$ the {\em terminal vertex}),
	and $P$ contains no edge of $C_1$ or $C_2$.	
	A pair of disjoint $(C_1,C_2)$-paths $P_1$ and $P_2$ is called {\em good} with respect to a subgraph $S$
	if no vertex $v_1 \in V(P_1)$ is adjacent to any vertex $v_2 \in V(P_2)$ via an edge of $S$.
	When $S$ is clear from the context, we simply refer to the two paths as {\em good}.

	Given a cycle $C$ and two vertices $x,y \in V(C)$, 
	we refer to each of the two subpaths of $C$ between $x$ and $y$ as an {\em $[x,y]_C$-segment}.
	
	Let $F$ be a $2$-factor of $G$ containing exactly two odd cycles, $C_1$ and $C_2$.
	Our next goal is to find two good $(C_1,C_2)$-paths with respect to $F$, 
	based on which we will construct a normal $6$-edge-coloring of $G$.	
	To identify such paths, we first prove the following claim,
	which guarantees their existence in any graph satisfying the properties 
	established for $G$ in Claims~\ref{cl:multi}-\ref{cl:special3cut}.
	
	\begin{claim}
		\label{cl:goodpaths}
		Let $H$ be a simple, $3$-edge-connected, triangle-free cubic graph with oddness $2$,
		containing no $3$-edge-cuts as described in Claim~\ref{cl:special3cut}.
		Let $F$ be a $2$-factor of $H$ with exactly two odd cycles, $C_1$ and $C_2$.
		Then, either $H$ contains a pair of good $(C_1,C_2)$-paths with respect to $F$, or $H$ is isomorphic to the Petersen graph.
	\end{claim}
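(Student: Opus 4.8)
The plan is to argue by contradiction: assume $H$ has no pair of good $(C_1,C_2)$-paths with respect to $F$ and deduce that $H$ must be the Petersen graph. The first step is to reduce to the existence of \emph{some} pair of disjoint $(C_1,C_2)$-paths. Since $F = C_1 \cup C_2 \cup (\text{even cycles})$ is a $2$-factor and $M = E(H)\setminus E(F)$ is a perfect matching, the matching edges together with the even cycles of $F$ form a connected "bridge" structure between $C_1$ and $C_2$; because $H$ is $3$-edge-connected there are at least three edge-disjoint paths from $C_1$ to $C_2$ using only $M$-edges and even-cycle edges, and a short argument (contracting each even cycle and each of $C_1,C_2$ to a vertex, then invoking Menger) yields at least two \emph{vertex}-disjoint such paths $P_1, P_2$. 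So the only obstruction can be the "goodness" condition: for every choice of two disjoint $(C_1,C_2)$-paths, some edge of $F$ joins a vertex of one path to a vertex of the other.

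Next I would analyze what this forbidden adjacency can look like. An edge of $F$ joining $V(P_1)$ to $V(P_2)$ lies either on $C_1$, on $C_2$, or on an even cycle of $F$ (a matching edge cannot do this, since $M$-edges already lie inside the paths or would let us extend/reroute them). The key observation is that if, say, the starting vertices $x_1 \in C_1$ of $P_1$ and $x_2 \in C_1$ of $P_2$ are \emph{consecutive} on $C_1$, we can try to slide $x_2$ one step along $C_1$; repeating such local surgeries, together with swapping which $M$-edge we use to leave $C_1$ or $C_2$, should let us either produce a good pair or force a very rigid local structure. The heart of the argument is to show that if \emph{no} sequence of such moves produces a good pair, then around $C_1$, $C_2$ and the connecting even cycles the graph is forced, edge by edge, into the unique configuration realized by the Petersen graph (where $C_1, C_2$ are the two $5$-cycles of a pentagon/pentagram drawing and the five spokes are the matching). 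At the point where the local structure would otherwise close up into a small $3$-edge-cut, the hypothesis that $H$ contains no $3$-edge-cut of the type in Claim~\ref{cl:special3cut} is exactly what rules out all the "near-Petersen but not Petersen" cases and pins down $H \cong P$.

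I expect the main obstacle to be the case analysis in this last step: organizing the local surgeries so that every configuration in which goodness fails is handled, and showing each either reduces to a smaller/earlier case or matches Petersen. In particular one must be careful that the rerouting moves keep the two paths disjoint and internally disjoint from $C_1 \cup C_2$, and that triangle-freeness plus simplicity of $H$ are invoked wherever a surgery would otherwise create a short cycle or a repeated edge. A secondary technical point is the very first reduction: making the "three edge-disjoint $M$-paths between $C_1$ and $C_2$" argument yield two genuinely vertex-disjoint paths requires handling the degenerate situation where all connecting paths pass through a common even cycle, and here the parity constraint (the even cycles of $F$ have even length, while $C_1$ and $C_2$ are odd) together with $3$-edge-connectivity should force enough room; if it does not, one again lands on a small edge-cut and invokes Claim~\ref{cl:special3cut} or Claim~\ref{cl:2cut}-type reasoning to conclude $H$ is the Petersen graph.
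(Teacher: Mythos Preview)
Your proposal has the right opening move---contract the even cycles of $F$ to form $H^*$ and look for disjoint $(C_1,C_2)$-paths---but it misidentifies where the real difficulty lies and, as a result, is missing the central mechanism of the proof.

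The case you call a ``secondary technical point''---when every $(C_1,C_2)$-path in $H^*$ passes through a common contracted even cycle $C_x$, i.e., $H^*$ has a cut-vertex---is in fact the bulk of the argument, and your proposed resolution of it is wrong. In this situation $H$ contains the even cycle $C_x$ in its $2$-factor, so $H$ is certainly \emph{not} the Petersen graph; thus ``invoke Claim~\ref{cl:special3cut} or Claim~\ref{cl:2cut}-type reasoning to conclude $H$ is the Petersen graph'' cannot succeed. What is actually needed is an \emph{inductive} framework: prove Claim~\ref{cl:goodpaths} for all $H$ satisfying its hypotheses by taking a smallest counterexample $H$, then in the cut-vertex case build, from each side of $C_x$, a strictly smaller cubic graph that still satisfies the hypotheses (this requires explicit constructions, with separate treatments for $|T_r|=3$ and $|T_r|\ge 5$, and a check that the new graph is not itself Petersen). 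Induction then supplies good $(C_1,C_x)$-paths and good $(C_2,C_x)$-paths, and one must combine them across $C_x$ without creating an $F$-adjacency---this combination step is a substantial case analysis governed by how the endpoints interleave on $C_x$ and is precisely where Claim~\ref{cl:special3cut} is used. None of this is visible in your sketch.

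Even in the case where $H^*$ does admit two disjoint $(C_1,C_2)$-paths, the ``slide $x_2$ one step along $C_1$'' surgery does not work as stated: moving an endpoint along $C_1$ means leaving $C_1$ by a different matching edge, which changes the entire path, not just its endpoint. The paper instead introduces further $(C_1,C_2)$-paths $P_3^*,P_4^*,P_5^*$ (using $3$-edge-connectivity and $|C_i|\ge 5$) and reroutes through their first intersection with the existing paths; only when all five paths are pairwise disjoint and their endpoints are forced into a single adjacency pattern does one conclude $H$ is Petersen.
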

	
	\begin{proofclaim}
		We prove the claim by contradiction.
		Assume that $H$ is a smallest graph (with respect to the number of vertices) 
		satisfying the given properties for which the claim fails to hold.
		
		Let $F$ be a $2$-factor of $H$ with exactly two odd cycles $C_1$ and $C_2$.	
		We denote the matching $H \setminus E(F)$ by $M$.
		Note that by Claim~\ref{cl:3cyc}, both odd cycles have length at least~$5$.
		
		Next, let $H^*$ be the graph obtained from $H$ by contracting the edges of the even cycles in $F$,
		and then simplifying the resulting graph by deleting any loops and retaining only a single edge between each pair of adjacent vertices.		
		Note that in $H^*$, the vertices of $C_1$ and $C_2$ retain degree $3$, while the degree of any other vertex may be any positive integer.
		Clearly, a good pair of $(C_1,C_2)$-paths with respect to $C_1 \cup C_2$ in $H^*$ implies existence of a good pair of $(C_1,C_2)$-paths with respect to $F$ in $H$.
		
		We consider two cases regarding the number of disjoint $(C_1,C_2)$-paths in $H^*$.
		
		\medskip
		\noindent {\bf Case 1:} \quad
		{\em There are at least two disjoint $(C_1,C_2)$-paths in $H^*$.}

		Let $P_1^* = [x_1,y_1]$ and $P_2^* = [x_2,y_2]$ be such two paths.
		If $P_1^*$ and $P_2^*$ are good, then we are done.
		Otherwise, their starting vertices or terminal vertices are adjacent.	

		Without loss of generality, we may assume that $x_1$ and $x_2$ are adjacent.
		Since $H$ is $3$-edge-connected, 
		there exists a $(C_1,C_2)$-path $P_3^* = [x_3,y_3]$ with $x_3 \notin \set{x_1,x_2}$.
		Moreover, since $C_1$ has length at least $5$ and $H$ contains no $2$-edge-cut,
		we can choose $P_3^*$ such that $x_3$ is adjacent to neither $x_1$ nor $x_2$
		(note that $P_3^*$ is not necessarily disjoint from $P_1^*$ or $P_2^*$).
		
		We now consider two cases, and in each, we show that either there exist two good $(C_1,C_2)$-paths in $H$, 
		or $H$ is isomorphic to the Petersen graph.

		\medskip
		\noindent {\bf Case 1.1:} \quad	
		{\em Suppose that $P_3^*$ contains a vertex that also lies on $P_1^*$ or $P_2^*$.}

		Let $w$ be the first such vertex on $P_3^*$ starting from $x_3$, say $w \in V(P_1^*)$.
		Consider the path $P_3^\dag = [x_3,w]_{P_3^*} \cup [w,y_1]_{P_1^*}$ (note that $w \ne y_1$, since $H$ is cubic).
		If $y_1$ and $y_2$ are not adjacent, then $P_3^\dag$ and $P_2^*$ form a good pair.
		Hence, we may assume that $y_1$ and $y_2$ are adjacent.

		Now, similarly as above, there is a $(C_1,C_2)$-path $P_4^* = [x_4,y_4]$ with $y_4 \notin \set{y_1,y_2}$
		such that $y_4$ is not adjacent neither to $y_1$ nor $y_2$.
		If $P_4^*$ has no common vertex with $P_1^*$, $P_2^*$, and $P_3^\dag$, 
		then $P_4^*$ and $P_i^*$ are good, where $i \in \set{1,2}$ is such that $x_4$ and $x_i$ are not adjacent
		(recall that $x_1$ and $x_2$ are adjacent and thus such an $i$ exists).
		Otherwise, $P_4^*$ has a common vertex with $P_1^*$ or $P_2^*$, or $P_3^\dag$;
		let $z$ be the first such vertex starting from $y_4$. 
		\begin{itemize}
			\item{} If $z \in V(P_1^*)$, then $P_2^*$ and $[x_3,w]_{P_3^\dag} \cup [w,z]_{P_1^*} \cup [z,y_4]_{P_4^*}$ form a good pair
				(this holds regardless of whether $w$ appears before or after $z$ on $P_1^*$).
			\item{} If $z \in V(P_2^*)$, then $P_3^\dag$ and $[x_2,z]_{P_2^*} \cup [z,y_4]_{P_4^*}$ form a good pair.
			\item{} If $z \in V(P_3^\dag)$, then $P_1^*$ and $[x_3,z]_{P_3^\dag} \cup [z,y_4]_{P_4^*}$ form a good pair.
		\end{itemize}

		\medskip
		\noindent {\bf Case 1.2:} \quad	
		{\em Suppose that $P_3^*$ is disjoint from $P_1^*$ and $P_2^*$.}

		If $y_3$ is not adjacent to both $y_1$ and $y_2$, 
		say, $y_1$ and $y_3$ are not adjacent, then $P_1^*$ and $P_3^*$ form a good pair.
		So, we may assume that $y_3$ is adjacent to both $y_1$ and $y_2$.

		But then, by Claim~\ref{cl:2cut}, there exists a $(C_1,C_2)$-path $P_4^* = [x_4,y_4]$
		such that $y_4 \notin \set{y_1,y_2,y_3}$.
		Since $C_2$ has length at least $5$, $y_4$ is not adjacent to both $y_1$ and $y_2$;
		say that $y_4$ is not adjacent to $y_1$.

		Suppose first that $y_4$ is also not adjacent to $y_2$.
		If $P_4^*$ is disjoint from $P_1^*$, $P_2^*$, and $P_3^*$,
		then there is $i \in \set{1,2,3}$ such that
		$x_4$ is not adjacent to $x_i$ and thus $P_i^*$ and $P_4^*$ form a good pair.
		Otherwise, $P_4^*$ has a common vertex with $P_1^*$, $P_2^*$, or $P_3^*$;
		let $z$ be the first such vertex from $y_4$.
		If $z \in V(P_i^*)$, $i \in \set{1,2}$, then $P_3^*$ and $[x_i,z]_{P_i^*} \cup [z,y_4]_{P_4^*}$ form a good pair.
		Otherwise, $z \in V(P_3^*)$ and a good pair is formed by $P_1^*$ and $[x_3,z]_{P_3^*} \cup [z,y_4]_{P_4^*}$.

		Thus, we may assume that $y_4$ and $y_2$ are adjacent.
		Note that if $P_4^*$ has a common vertex with $P_1^*$, $P_2^*$, or $P_3^*$, 
		then the same paths as described in the above paragraph form good pairs.
		Hence, we may assume that $P_4^*$ is disjoint from $P_1^*$, $P_2^*$, and $P_3^*$.
		Moreover, we may also assume that $x_4$ is adjacent to $x_1$ and $x_3$, 
		otherwise, $P_4^*$ forms a good pair with $P_1^*$ or $P_3^*$, respectively (see Figure~\ref{fig:case1-4paths}).
		\begin{figure}[htp!]
			$$
				\includegraphics{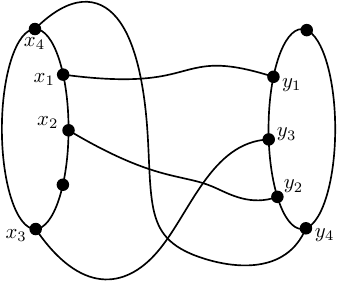}
			$$
			\caption{Four disjoint paths between $C_1$ and $C_2$ in $H^*$.}
			\label{fig:case1-4paths}
		\end{figure}

		By Claim~\ref{cl:2cut} and since $C_2$ has length at least $5$, 
		there is also a $(C_1,C_2)$-path $P_5^* = [x_5,y_5]$, 
		where $y_5 \notin \set{y_1,y_2,y_3,y_4}$.
		If $P_5^*$ has a common vertex with $P_1^*$, $P_2^*$, $P_3^*$, or $P_4^*$,
		then we can again find a good pair of paths in a similar manner as above.
		Moreover, if $P_5^*$ is disjoint from paths $P_1^*$, $P_2^*$, $P_3^*$, and $P_4^*$,
		then we have a good pair unless $y_5$ is adjacent to $y_1$ and $y_4$,
		and $x_5$ is adjacent to $x_2$ and $x_3$.
		This means that $C_1$ and $C_2$ both have length exactly $5$ 
		and each of their vertices is either the starting or the terminal vertex of one 
		of the paths $P_i^*$, $i \in \set{1,2,3,4,5}$.
		
		Now, suppose that there are two paths $P_i^*$ and $P_j^*$ having two internal vertices joined by a path.
		Let $P^* = [w_1,w_2]$ be the shortest such path;
		without loss of generality, we may assume that 
		$w_1 \in V(P_1^*)$ and $w_2 \in V(P_2^*)$.		
		Then, $[x_1,w_1]_{P_1^*} \cup [w_1,w_2] \cup [w_2,y_2]_{P_2^*}$ and $P_5^*$ form a good pair.		
		Hence, we may assume that there are no paths between the five paths (apart from the edges of $C_1$ and $C_2$), 
		and thus the length of each path must be one (i.e., $x_iy_i$ is an edge),
		otherwise $H$ would contain a $2$-edge-cut (comprised of the two edges incident with $x_i$ and $y_i$, not belonging to $C_1$ and $C_2$, respectively), contradicting Claim~\ref{cl:2cut}.
		Therefore, $H$ is isomorphic to the Petersen graph,
		which admits a normal $5$-edge-coloring, a contradiction.

		\medskip
		\noindent {\bf Case 2:} \quad
		{\em There is no pair of disjoint $(C_1,C_2)$-paths in $H^*$.}

		This means that $H^*$ contains a cut-vertex $x$
		such that every $(C_1,C_2)$-path passes through $x$.
		Let $C_x$ be the even cycle of $F$ corresponding to $x$, 
		and let $S_1^*$ and $S_2^*$ be the two components of $H^* \setminus \set{x}$ containing $C_1$ and $C_2$, respectively.
		Define $H_1^* = H^*[S_1^* \cup x]$ and $H_2^* = H^*[S_2^* \cup x]$.	
		Finally, let $H_1$ and $H_2$ be the subgraphs of $H$ corresponding to $H_1^*$ and $H_2^*$, respectively;
		i.e., with the even cycles of $F$ expanded.
		
		Recall that for a good pair of $(C_1,C_2)$-paths $P_1$ and $P_2$ with respect to $F$ in $H$, 
		we require that no vertex of $P_1$ is adjacent to a vertex of $P_2$ via an edge of $F$.
		
		The graphs $H_1$ and $H_2$ both contain $C_x$, along with all of its (eventual) chords.		
		Let $T_1$ and $T_2$ be the sets of edges in $H_1$ and $H_2$, respectively, 
		that have exactly one endvertex from $C_x$.
		By Claim~\ref{cl:2cut}, we have that $|T_1| \ge 3$ and $|T_2| \ge 3$.
		
		Observe that the graph $H_1$ contains exactly $|T_2|$ vertices of degree $2$, which we denote by $V_{x,1}$.
		Moreover, $|T_2|$ must be odd; otherwise, we could connect pairs of vertices from $V_{x,1}$
		and obtain a cubic graph with a $2$-factor containing exactly one odd cycle, which is not possible.
		A symmetric argument shows that $|T_1|$ is also odd. 
		
		We now describe two constructions of graphs obtained from $H_1$ and $H_2$, 
		with the properties required by Claim~\ref{cl:goodpaths},
		which will be used in the final step of this case.
		We consider two cases regarding the size of $T_r$, for $r \in \set{1,2}$.

		\medskip
		\noindent {\bf Construction A} (for $|T_r| = 3$):
		
		Label the vertices of $T_r$ consecutively as they appear on $C_x$ by $x_1$, $x_2$, and $x_3$,
		and denote the edge with $x_i$ as its only endvertex on $C_x$ by $e_i$, for every $i \in \set{1,2,3}$.
		
		Let $H_r^{i}$ be the graph obtained from $H_r$ by first removing all chords of $C_x$ (if any exist),
		and then suppressing all $2$-vertices on $C_x$.
		We obtain a cubic graph, in which the cycle corresponding to $C_x$ in $H_r$ is a triangle.
		Now, let $\set{j,k} = \set{1,2,3}\setminus\set{i}$.
		Subdivide the edges $x_ix_j$ and $x_ix_k$ twice each, 
		and label the new vertices as depicted in Figure~\ref{fig:case2-3incoming}.
		Finally, add the edges $y_{ij}y_k$ and $y_{ik}y_j$.
		\begin{figure}[htp!]
			$$
				\includegraphics{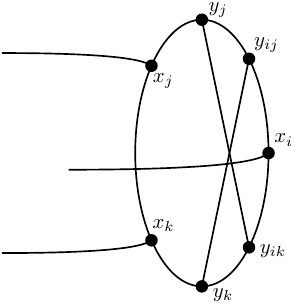}
			$$
			\caption{The edges in the cycle corresponding to $C_x^i$ in $H_r^i$ from Construction A.}
			\label{fig:case2-3incoming}
		\end{figure}
		
		It is straightforward to verify that $H_r^i$ satisfies the assumptions of the claim.
		Let $C_x^i$ denote the cycle in $H_r^i$ corresponding to $C_x$ in $H_r$,
		and let $F_r^i$ be the subgraph of $H_r^i$ corresponding to $F$, where $C_x$ is replaced by $C_x^i$.
		
		By the minimality of $H$ and since $H_r^i$ is not isomorphic to the Petersen graph, 
		it follows that $H_r^i$ contains at least one pair of 
		good $(C_r,C_x^i)$-paths with respect to $F_r^i$, each of which contains the edge $e_i$,
		since $e_j$ and $e_k$ cannot appear in a good pair of paths due to the adjacency of $x_j$ and $x_k$.
		
		\medskip
		\noindent {\bf Construction B} (for $|T_r| = k \ge 5$):

		Label the vertices of $T_r$ consecutively as they appear on $C_x$ by $x_1,\dots,x_k$,
		and denote the edge with $x_i$ as its only endvertex on $C_x$ by $e_i$, for every $i \in \set{1,\dots,k}$.
		
		Let $H_r^{0}$ be the graph obtained from $H_r$ by first removing all chords of $C_x$ (if any exist),
		and then suppressing all $2$-vertices on $C_x$.
		We obtain a cubic graph, in which the cycle corresponding to $C_x$ in $H_r$ is a $k$-cycle.

		Again, $H_r^0$ satisfies the assumptions of the claim.
		Let $C_x^0$ be the cycle in $H_r^0$ corresponding to $C_x$ in $H_r$,
		and let $F_r^0$ be the subgraph of $H_r^0$ corresponding to $F$, where $C_x$ is replaced by $C_x^0$.
		
		By the minimality of $H$, 
		we have that $H_r^0$ contains at least one pair of good $(C_r,C_x^0)$-paths with respect to $F_r^0$ or, 
		in the case when $|T_r|=5$, $H_r^0$ may be isomorphic to the Petersen graph.
		
		\bigskip
		Now, we are ready to proceed with the proof of the claim. 
		Define $t_1 = |T_1|$ and $t_2 = |T_2|$.
		We continue by considering the cases regarding the values of $t_1$ and $t_2$.
		
		Denote the vertices of $T_1$ and $T_2$ appearing consecutively on $C_x$ 
		in the counterclockwise order, by $x_1,\dots,x_{t_1}$ and $y_1,\dots,y_{t_2}$, respectively.		
		Moreover, we assume the labeling in such a way that in the counterclockwise order, 
		starting from $x_1$, $y_1$ appears as the first vertex from $T_2$.
		We will show that using the constructions defined above, we can find good pairs 
		of $(C_1,C_x)$-paths and $(C_2,C_x)$-paths.
		The key property that needs to be satisfied is that we can combine two pairs of good paths 
		in such a way that no two vertices on the final pair of paths are adjacent with respect to $F$.

		\medskip
		\noindent {\bf Case 2.1:} \quad	
		{\em $t_1 = 3$ and $t_2 = 3$.}

		\medskip
		\noindent {\bf Case 2.1.1:} \quad	
		{\em On $C_x$, between every pair of vertices from $T_1$ there is a vertex from $T_2$.}
		
		If $C_x$ contains no chord, then $C_x$ is a $6$-cycle and $C_x = x_1y_1x_2y_2x_3y_3$.
		The graph $H_1^1$, obtained by using Construction A, guarantees a good 
		pair of $(C_1,C_x)$-paths $P_1$ and $P_2$, 
		where one of them, say $P_1$, ends in $x_1$ 
		and $P_2$ ends either in $x_2$ or $x_3$, say $x_2$.
		
		Now, if one of the graphs $H_2^2$ and $H_2^3$ obtained from Construction A
		guarantees a good pair of $(C_2,C_x)$-paths $P_1'$ and $P_2'$ ending in the vertices $y_2$ and $y_3$, respectively,
		then a good pair of $(C_1,C_2)$-paths is formed by 
		$P_1 \cup x_1y_3 \cup P_2'$ and $P_2 \cup x_2y_2 \cup P_1'$.
		So, we may assume that no pair of good $(C_2,C_x)$-paths guaranteed by $H_2^2$ and $H_2^3$ ends in $y_2$ and $y_3$.
		Thus a good pair of $(C_2,C_x)$-paths $P_1'$ and $P_2'$ guaranteed by $H_2^2$ ends in $y_2$ and $y_1$, respectively.
		Similarly, a good pair of $(C_2,C_x)$-paths $P_3'$ and $P_4'$ guaranteed by $H_2^3$ ends in $y_3$ and $y_1$, respectively.
		But then, a good pair of $(C_1,C_x)$-paths $P_3$ and $P_4$ guaranteed by $H_1^3$ can be used.
		Namely, if $P_3$ and $P_4$ end in $x_3$ and $x_1$, respectively,
		then a good pair of $(C_1,C_2)$-paths is formed by 
		$P_3 \cup x_3y_2 \cup P_1'$ and $P_4 \cup x_1y_1 \cup P_2'$.
		Otherwise $P_3$ and $P_4$ end in $x_3$ and $x_2$, respectively;
		in this case, we use $P_3 \cup x_3y_3 \cup P_3'$ and $P_4 \cup x_2y_1 \cup P_4'$.
		
		Note that we can proceed in the same manner also when $C_x$ contains some chords,
		by some additional traversals of the edges of $C_x$ when combining the paths.

		\medskip
		\noindent {\bf Case 2.1.2:} \quad
		{\em There is exactly one pair of vertices from $T_1$ such that there are vertices from $T_2$ between them on $C_x$.}
		
		Without loss of generality, we assume that $x_1$ and $x_3$ are such a pair.
		By Claim~\ref{cl:2cut}, there is a chord in $C_x$ with an endvertex $d_1$ between at least one 
		of the two remaining pairs of vertices from $T_1$;
		without loss of generality, we may assume that $d_1$ is between $x_1$ and $x_2$.
		Now, we use a good pair of $(C_1,C_x)$-paths $P_1$ and $P_2$ guaranteed by $H_1^1$
		ending at $x_1$ and $x_i$, with $i \in \set{2,3}$, respectively.
		
		If there is an endvertex of some chord between $y_2$ and $y_3$, 
		then we use a good pair of $(C_2,C_x)$-paths $P_1'$ and $P_2'$ guaranteed by $H_2^3$
		ending at $y_3$ and $y_i$, with $i \in \set{1,2}$, respectively.
		Combining the paths $P_1$, $P_1'$ and $P_2$, $P_2'$ gives
		a good pair of $(C_1,C_2)$-paths.
		
		Otherwise, by Claim~\ref{cl:2cut}, 
		there must be an endvertex of some chord between $y_1$ and $y_2$,
		and we use a good pair of $(C_2,C_x)$-paths $P_3'$ and $P_4'$ guaranteed by $H_2^1$,
		where $P_3'$ ends in $y_1$.
		In this case, we combine the paths $P_1$, $P_4'$ and $P_2$, $P_3'$.
		
		\medskip
		\noindent {\bf Case 2.1.3:} \quad	
		{\em There is exactly one pair of vertices from $T_1$ such that there is no vertex from $T_2$ between them on $C_x$.}
		
		Without loss of generality, we may assume that $x_1$ and $x_2$ are such a pair.
		Therefore, $y_1$ is between $x_2$ and $x_3$, 
		and $y_3$ is between $x_1$ and $x_3$.		
		By the symmetry, we may also assume that $y_2$ is between $x_1$ and $x_3$.
		We consider two subcases.
		
		\medskip
		\noindent {\bf Case 2.1.3.1:} \quad
		{\em The vertices $x_1$ and $x_2$ are not adjacent.}
		
		Suppose first that $y_1$ is adjacent to $x_2$ and $x_3$.
		We use a good pair of $(C_1,C_x)$-paths $P_1$ and $P_2$ guaranteed by $H_1^1$,
		with $P_1$ ending in $x_1$ and $P_2$ ending in one of $x_i$, with $i \in \set{2,3}$.		
		If there is a good pair of $(C_2,C_x)$-paths $P_1'$ and $P_2'$ ending in $y_1$ and $y_3$, respectively,
		then we combine paths $P_1$, $P_2'$ and $P_2$, $P_1'$.
		
		Otherwise, in the case when $y_2$ is not adjacent to $x_3$,
		we use a good pair of $(C_2,C_x)$-paths $P_3'$ and $P_4'$ guaranteed by $H_2^1$ and ending in $y_1$ and $y_2$, respectively.
		We combine $P_1$, $P_4'$ and $P_2$, $P_3'$.
		So, we may assume that $y_2$ is adjacent to $x_3$.
		But then, by Claim~\ref{cl:special3cut}, $y_2$ and $y_3$ are not adjacent,
		and we use a good pair of $(C_2,C_x)$-paths $P_5'$ and $P_6'$ guaranteed by $H_2^3$ and ending in $y_3$ and $y_2$, respectively.
		In this case, we combine $P_1$, $P_5'$ and $P_2$, $P_6'$.
		
		Second, suppose that $y_1$ is not adjacent to $x_3$.
		We use a good pair of $(C_1,C_x)$-paths $P_1$ and $P_2$ guaranteed by $H_1^2$,
		with $P_1$ ending in $x_2$ and $P_2$ ending in one of $x_i$, with $i \in \set{1,3}$,
		and
		we use a good pair of $(C_2,C_x)$-paths $P_1'$ and $P_2'$ guaranteed by $H_2^1$,
		with $P_1'$ ending in $y_1$ and $P_2'$ ending in one of $y_i$, with $i \in \set{2,3}$.
		We combine $P_1$, $P_1'$ and $P_2$, $P_2'$.
		
		Finally, suppose that $y_1$ is not adjacent to $x_2$ (and is adjacent to $x_3$, due to the previous case).
		If $x_3$ is not adjacent to $y_2$, then
		we use a good pair of $(C_1,C_x)$-paths $P_1$ and $P_2$ guaranteed by $H_1^3$,
		with $P_1$ ending in $x_3$,
		and a good pair of $(C_2,C_x)$-paths $P_1'$ and $P_2'$ guaranteed by $H_2^1$,
		with $P_1'$ ending in $y_1$.
		We combine $P_1$, $P_1'$ and $P_2$, $P_2'$.
		Otherwise, $x_3$ is adjacent to $y_2$,
		and it follows by Claim~\ref{cl:special3cut} that $y_3$ is not adjacent to $y_2$.
		Therefore, we may use a good pair of $(C_1,C_x)$-paths $P_3$ and $P_4$ guaranteed by $H_1^1$,
		with $P_3$ ending in $x_1$,
		and a good pair of $(C_2,C_x)$-paths $P_3'$ and $P_4'$ guaranteed by $H_2^3$,
		with $P_3'$ ending in $y_3$.
		We combine $P_3$, $P_3'$ and $P_4$, $P_4'$.

		\medskip
		\noindent {\bf Case 2.1.3.2:} \quad
		{\em The vertices $x_1$ and $x_2$ are adjacent.}
		
		By Claim~\ref{cl:special3cut}, we have that $y_1$ is not adjacent to both $x_2$ and $x_3$.
		
		Suppose first that $y_1$ is not adjacent to $x_3$.
		We use a good pair of $(C_1,C_x)$-paths $P_1$ and $P_2$ guaranteed by $H_1^3$,
		with $P_1$ ending in $x_3$.
		If $P_2$ ends in $x_2$,
		then we use a good pair of $(C_2,C_x)$-paths $P_1'$ and $P_2'$ guaranteed by $H_2^1$,
		with $P_1'$ ending in $y_1$,
		and combine $P_1$, $P_2'$ and $P_2$, $P_1'$.
		Otherwise, $P_2$  ends in $x_1$.
		In this case, if $P_2'$ ends in $y_3$, then we combine $P_1$, $P_1'$ and $P_2$, $P_2'$,
		and if $P_2'$ ends in $y_2$, then we combine $P_1$, $P_2'$ and $P_2$, $P_1'$.
		
		Second, suppose that $y_1$ is not adjacent to $x_2$ (and is adjacent to $x_3$ by the previous case).
		We again use a good pair of $(C_1,C_x)$-paths $P_1$ and $P_2$ guaranteed by $H_1^3$,
		with $P_1$ ending in $x_3$,
		and a good pair of $(C_2,C_x)$-paths $P_1'$ and $P_2'$ guaranteed by $H_2^1$,
		with $P_1'$ ending in $y_1$.
		If $P_2'$ ends in $y_3$ or $y_2$ is not adjacent to $x_3$,
		then we combine $P_1$, $P_1'$ and $P_2$, $P_2'$.
		Otherwise, $P_2$ ends in $y_2$ and $y_2$ is adjacent to $x_3$.
		Then, by Claim~\ref{cl:special3cut}, $y_2$ is not adjacent to $y_3$.
		We take a good pair of $(C_2,C_x)$-paths $P_3'$ and $P_4'$ guaranteed by $H_2^3$,
		with $P_3'$ ending in $y_3$,
		and combine $P_2$, $P_3'$ and $P_1$, $P_4'$.

		\medskip
		\noindent {\bf Case 2.2:} \quad	
		{\em $t_1 = 3$ and $t_2 \ge 5$.}
		Note that by the symmetry, this case also covers the case $t_1 \ge 5$ and $t_2 = 3$.		
		
		Suppose first that $t_2 = 5$ and $H_2$ is isomorphic to the Petersen graph.
		Label the five vertices on $C_2$ by $z_1,z_2,\dots,z_5$ in the counterclockwise order,
		where $z_1$ is adjacent to $y_1$.
		\begin{figure}[htp!]
			$$
				\includegraphics{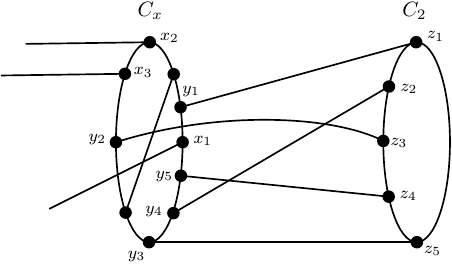}
			$$
			\caption{An example of the vertices on the cycles $C_x$ and $C_2$ when $|T_1|=3$ and $H_2$ is isomorphic to the Petersen graph.}
			\label{fig:case2-H2Pet}
		\end{figure}
		
		By Claim~\ref{cl:2cut}, no vertex from $T_1$ is adjacent to both of the other two vertices from $T_1$;
		therefore, since $t_1 = 3$, at least one vertex from $T_1$ is not adjacent to any of the others.		
		Thus, without loss of generality, we may assume that $x_1$ is adjacent to neither $x_2$ nor $x_3$
		(see Figure~\ref{fig:case2-H2Pet} for the labeling of the vertices).
		
		Let $P_1$ and $P_2$ be a good pair of $(C_1,C_x)$-paths guaranteed by $H_1^1$. 
		We may assume, due to symmetry, that they end in $x_1$ and $x_2$, respectively.
		Now, suppose that all vertices of $T_2$ appear between two vertices of $T_1$.
		Then, we combine $P_1$, $y_1z_1$ and $P_2$, $y_5z_4$ or $P_1$, $y_5z_4$ and $P_2$, $y_1z_1$ (depending on the adjacencies).
		Hence, we may assume that not all vertices of $T_2$ lie between two vertices of $T_1$.
		
		Suppose that $y_4$ is not adjacent to $y_5$. 
		If $y_4$ and $y_5$ are not both adjacent to $x_2$,
		then we can combine $P_1$, $y_5z_4$ and $P_2$, $y_4z_2$
		or $P_1$, $y_4z_2$ and $P_2$, $y_5z_4$.
		If $y_4$ and $y_5$ are both adjacent to $x_2$, 
		then we combine $P_1$, $y_1z_1$ and $P_2$, $y_5z_4$.
		
		So, we may assume that $y_4$ is adjacent to $y_5$. 
		If $y_1$ is not adjacent to $x_1$,
		then we combine $P_1$, $y_5z_4$ and $P_2$, $y_1z_1$.
		Hence, we may also assume that $y_1$ and $x_1$ are adjacent.
		Similarly, if $y_1$ is not adjacent to $x_2$,
		then we combine $P_1$, $y_1z_1$ and $P_2$, $y_2z_3$.
		Thus, also $y_1$ and $x_2$ are adjacent.
		
		By Claim~\ref{cl:special3cut}, $x_2$ is not adjacent to $x_3$.
		In this case, let $P_3$ and $P_4$ be a good pair of $(C_1,C_x)$-paths guaranteed by $H_1^3$,
		with $P_3$ ending in $x_3$. 
		If $P_4$ ends in $x_2$, 
		then we combine $P_3$, $y_5z_4$ and $P_4$, $y_1z_1$.
		Otherwise, $P_4$ ends in $x_1$, 
		and we combine $P_3$, $y_2z_3$ and $P_4$, $y_1z_1$.

						
		\bigskip				
		Thus, we may assume that $H_2$ is not isomorphic to the Petersen graph. 
		Then, there exists a good pair of $(C_2,C_x)$-paths $Q_1$ and $Q_2$, guaranteed by $H_2^0$,
		ending in $y_i$ and $y_{j}$, respectively,
		for some $i,j \in \set{1,\dots,t_2}$ with $2 \le |i-j| \le t_2 - 2$. 
		We can also assume, similarly to the previous case, that $x_1$ is adjacent to neither $x_2$ nor $x_3$.
		Hence, we may further assume that there exists a good pair of $(C_1,C_x)$-paths $P_1$ and $P_2$ guaranteed by $H_1^1$,
		ending in $x_1$ and $x_t$, where $t \in \set{2,3}$, respectively. Let $x_s$ be the vertex distinct from $x_1$ and $x_t$ (i.e., $s = 5 - t$).
		Additionally, let $P_3$ and $P_4$ be a good pair of $(C_1,C_x)$-paths guaranteed by $H_1^s$,
		ending in $x_s$ and $x_1$ or $x_t$.
				
		If the two endvertices of $P_1$ and $P_2$ lie on the same $[y_i,y_j]_C$-segment $S$
		(without loss of generality, assume that $x_1$ is between $y_i$ and $x_t$),
		then we combine $P_1$, $Q_1$ and $P_2$, $Q_2$.
		
		Otherwise, $x_1$ lies on one of the $[y_i,y_j]_C$-segments while $x_t$ lies on the other.		
		If $x_1$ is not adjacent to $y_j$ and $x_t$ is not adjacent to $y_i$,
		then we combine $P_1$, $Q_1$ and $P_2$, $Q_2$.
		
		So, suppose first that $x_1$ is adjacent to $y_j$. 
		Then, $x_1$ is not adjacent to $y_i$, since $2 \le |i-j| \le t_2 - 2$.
		If $x_t$ is not adjacent to $y_j$, 
		then we combine $P_1$, $Q_2$ and $P_2$, $Q_1$.
		Otherwise $x_t$ is adjacent to $y_j$ and, by Claim~\ref{cl:special3cut}, $x_t$ and $x_s$ are not adjacent.
		In this case, we combine $P_3$, $Q_1$ and $P_4$, $Q_2$.
		
		Finally, suppose that $x_t$ is adjacent to $y_i$ and $x_1$ is not adjacent to $y_j$ (recall that $x_t$ is not adjacent to $y_j$, since $2 \le |i-j| \le t_2 - 2$).
		If $x_1$ is not adjacent to $y_i$, then we combine $P_1$, $Q_2$ and $P_2$, $Q_1$.
		So, we may assume that $x_1$ is adjacent to $y_i$, and by Claim~\ref{cl:special3cut}, $x_t$ and $x_s$ are not adjacent.
		In this case, we combine $P_3$, $Q_2$ and $P_4$, $Q_1$.
		
		This completes the proof of Case~2.2.
		
		\medskip
		\noindent {\bf Case 2.3:} \quad	
		{\em $t_1 \ge 5$ and $t_2 \ge 5$.}
		
		Suppose first that both $H_1$ and $H_2$ are isomorphic to the Petersen graph.
		We follow the labeling from Case~2.2, and so we label the vertices of $H$ as depicted in Figure~\ref{fig:case2-H1-H2Pet}.
		\begin{figure}[htp!]
			$$
				\includegraphics{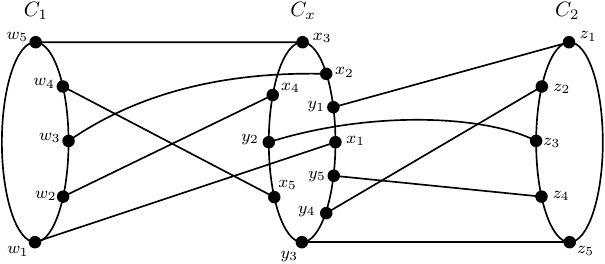}
			$$
			\caption{An example of the vertex structure on the cycles $C_x$, $C_1$, and $C_2$ 
				when $H_1$ and $H_2$ are both isomorphic to the Petersen graph.}
			\label{fig:case2-H1-H2Pet}
		\end{figure}
		Without loss of generality, assume that between $x_1$ and $y_5$ there is no other vertex from $T_1$ or $T_2$.
		
		Suppose that $x_2$ is adjacent to $x_1$.
		In this case, if $x_5$ is not adjacent to $y_5$, 
		then we form a good pair of $(C_1,C_2)$-paths by combining $w_1x_1$, $y_5z_4$ and $w_4x_5$, $y_1z_1$.
		So, we may assume that $x_5$ is adjacent to $y_5$.
		Further, if $x_4$ is not adjacent to $x_5$,
		then we combine $w_4x_5$, $y_5z_4$ and $w_2x_4$, $y_1z_1$.
		Hence, we also have that $x_4$ is adjacent to $x_5$.
		Finally, if $x_3$ is not adjacent to $x_4$, 
		then we combine $w_2x_4$, $y_5z_4$ and $w_5x_3$, $y_1z_1$.
		It follows that $x_3$ is adjacent to $x_4$.
		But this means that $x_2$ and $x_3$ are not adjacent, 
		and we combine $w_3x_2$, $y_5z_4$ and $w_5x_3$, $y_4z_2$.
		
		Therefore, we may assume that $x_2$ is not adjacent to $x_1$.
		If $y_4$ is not adjacent to $y_5$, 
		then we combine $w_1x_1$, $y_5z_4$ and $w_3x_2$, $y_4z_2$.
		So, $y_4$ is adjacent to $y_5$.
		If $y_3$ is not adjacent to $y_4$, 
		then we combine $w_1x_1$, $y_4z_2$ and $w_3x_2$, $y_3z_5$.
		So, $y_3$ is adjacent to $y_4$.
		Similarly, we deduce that $y_2$ is adjacent to $y_3$,
		and that $y_1$ is adjacent to $y_2$,
		a contradiction to Claim~\ref{cl:2cut}.
		
		\bigskip
		Hence, at least one of $H_1$ and $H_2$ is not isomorphic to the Petersen graph.
		
		First, suppose that $H_1$ or $H_2$, say $H_1$, is isomorphic to the Petersen graph.
		Let $Q_1$, $Q_2$ be a good pair of $(C_2,C_x)$-paths guaranteed by $H_2^0$,
		ending in $y_i$ and $y_j$, $2 \le |i-j| \le t_2-2$, respectively.
		
		If there is $k \in \set{1,\dots,5}$ such that $x_k$ is not adjacent to $x_{k+1}$ (indices taken modulo $5$) and 
		$x_k$, $x_{k+1}$ are both in the same $[y_i,y_j]_{C_x}$-segment, 
		then, since $H_1$ is isomorphic to the Petersen graph, there is a good pair of $(C_1,C_2)$-paths.
		
		Therefore, we may assume that there is a pair $x_k$, $x_{k+1}$ lying on distinct $[y_i,y_j]_{C_x}$-segments.
		If $x_k$ and $x_{k+1}$ are both adjacent neither to $y_i$ nor $y_j$, 
		then it is again easy to find a good pair of $(C_1,C_2)$-paths
		(since there is at least one other vertex from $T_2$ on each of the two segments).
		So, $x_k$ and $x_{k+1}$ are, without loss of generality, both adjacent to $y_i$.
		If $x_{k-1}$ is not on the same $[y_i,y_j]_{C_x}$-segment as $x_k$ (or $x_{k+2}$ is not on the same segment as $x_{k+1}$), 
		then, by the same argument as above, we can again find a good pair of paths.
		Thus, on one $[y_i,y_j]_{C_x}$-segment, there is only a pair of vertices, say $x_k$ and $x_{k-1}$.
		We already know that $x_k$ and $x_{k-1}$ are not adjacent (due to Claim~\ref{cl:special3cut}), 
		so we can find a good pair of $(C_1,C_2)$-paths.
		
		Finally, we may assume that none of $H_1$ and $H_2$ is isomorphic to the Petersen graph.
		Let $P_1$, $P_2$ be a good pair of $(C_1,C_x)$-paths guaranteed by $H_1^0$,
		ending in $x_k$ and $y_\ell$, $2 \le |k-\ell| \le t_1-2$, respectively,
		and
		let $Q_1$, $Q_2$ be a good pair of $(C_2,C_x)$-paths guaranteed by $H_2^0$,
		ending in $y_i$ and $y_j$, $2 \le |i-j| \le t_2-2$, respectively.
		If $x_k$ and $x_{\ell}$ lie on the same $[y_i,y_j]_{C_x}$-segment (assuming that $x_k$ is closer to $y_i$),
		then we combine the paths $P_1$, $Q_1$ and $P_2$, $Q_2$.
		Otherwise, we consider three cases.
		If $x_k$ is not adjacent to $y_j$ and $x_\ell$ is not adjacent to $y_i$,
		then we combine the paths $P_1$, $Q_1$ and $P_2$, $Q_2$.
		If $x_k$ is adjacent to $y_j$, then $x_k$ is not adjacent to $y_i$ and $x_\ell$ is not adjacent to $y_j$,
		and we combine the paths $P_1$, $Q_2$ and $P_2$, $Q_1$.
		And finally, if $x_k$ is not adjacent to $y_j$ and $x_\ell$ is adjacent to $y_i$,
		then $x_k$ is not adjacent to $y_i$ and $x_\ell$ is not adjacent to $y_j$,
		and we can again combine the paths $P_1$, $Q_2$ and $P_2$, $Q_1$.
		
		This completes the proof of the claim.
	\end{proofclaim}

	\bigskip
	Now, we construct a normal $6$-edge-coloring of $G$.
	By Claim~\ref{cl:goodpaths}, there exists a pair of good $(C_1,C_2)$-paths $P_1 = [x_1,x_2]$ and $P_2 = [y_1,y_2]$.
	Note that every even cycle $C$ of $F$ is incident with either zero or two vertices of $P_1$,
	which are incident with one edge of $P_1$ on $C$.
	In the latter case, denote the vertex closer to $C_1$ by $x_{C,1}$ and the other by $x_{C,2}$.
	We refer to $x_{C,1}$ as {\em incoming for $P_1$} and to $x_{C,2}$ as {\em outgoing for $P_1$}.
	Similarly, we label the potential two vertices incident with $P_2$ by $y_{C,1}$ and $y_{C,2}$, 
	and refer to them as {\em incoming for $P_2$} and {\em outgoing for $P_2$}, respectively.
	
	We color every edge of $M$ that does not lie on $P_1$ or $P_2$ with color $1$,
	and the edges of $M$ lying on $P_1$ or $P_2$ with color $2$.
	
	Next, we color $C_1$ and $C_2$.
	Let $S_1$ and $S_1'$ be the two $[x_1,y_1]_{C_1}$-segments of $C_1$.
	We color the edges of $S_1$ alternately with colors $3$ and $4$,
	and the edges of $S_1'$ alternately with colors $5$ and $6$,
	such that the two edges incident with $x_1$ are colored with $3$ and $5$,
	and the two edges incident with $y_1$ are colored with either $3$ and $6$, or $4$ and $5$.
	In the same manner, we color the two $[x_2,y_2]_{C_2}$-segments of $C_2$,
	such that the two edges incident with $x_2$ are colored with $3$ and $5$,
	and the two edges incident with $y_2$ are colored with either $3$ and $6$, or $4$ and $5$.
	
	Finally, let $C$ be an even cycle of $F$.	
	If $C$ is incident neither with edges of $P_1$ nor $P_2$,
	then we color its edges alternately with colors $3$ and $4$.
	If $C$ is incident only with vertices of $P_1$ (or $P_2$),
	then we color one of the two segments $[x_{C,1},x_{C,2}]_C$ alternately with colors $3$ and $4$,
	and the other alternately with colors $5$ and $6$ such that $x_{C,1}$ 
	is incident with edges of colors $3$ and $5$ (or, respectively, $y_{C,1}$ is incident with edges of colors $3$ and $6$).
	
	If $C$ is incident with vertices of $P_1$ as well as of $P_2$, 
	we first, without loss of generality, assume that the vertices $x_{C,1}$, $x_{C,2}$, $y_{C,2}$, $y_{C,1}$
	appear around $C$ in the given order.	
	Now, we color the segment $[x_{C,1},x_{C,2}]_C$, that does not contain any vertex of $P_2$,
	alternately with colors $3$ and $4$;	
	next we color the segment $[x_{C,2},y_{C,2}]_C$, that does not contain $x_{C,1}$,
	alternately with colors $5$ and $6$ such that $x_{C,2}$ is incident with edges of colors either $3$ and $5$, or $4$ and $6$.
	We proceed by coloring the segment $[y_{C,1},y_{C,2}]_C$, that does not contain $x_{C,1}$,
	alternately with colors $3$ and $4$ such that $y_{C,2}$ is incident with edges of colors either $3$ and $6$, or $4$ and $5$.
	We finish by coloring the segment $[x_{C,1},y_{C,1}]_C$, that does not contain $x_{C,2}$,
	alternately with colors $5$ and $6$ such that $y_{C,1}$ is incident with edges of colors either $3$ and $6$, or $4$ and $5$ (see Figure~\ref{fig:final} for an example).
	Since $C$ is even, it follows that $x_{C,1}$ is incident with edges of colors either $3$ and $5$, or $4$ and $6$.
	\begin{figure}[htp!]
		$$
			\includegraphics{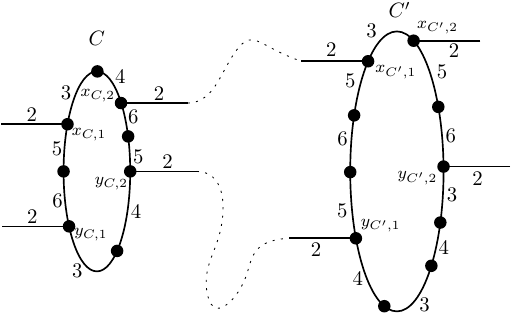}
		$$
		\caption{An example of a coloring of two even cycles.}
		\label{fig:final}
	\end{figure}	
	
	It remains to show that the constructed coloring is indeed normal.
	We first consider the edges of $M$.
	If an edge of $M$ is also an edge of $P_1$, 
	then each of its end-vertices is incident either with colors $3$ and $5$, or $4$ and $6$, hence it is normal.
	Similarly an edge of $P_2$ is incident either with colors $3$ and $6$, or $4$ and $5$ and therefore normal.
	If an edge of $M$ is neither an edge of $P_1$ nor $P_2$,
	then, at each endvertex, it is incident either with colors $3$ and $4$, or $5$ and $6$ and therefore normal.
	
	Next, we consider the edges of $F$.
	Suppose first that an edge $e$ is incident with an incoming and an outgoing vertex of $P_1$ (or $P_2$).
	Then $e$ is incident with two edges of color $2$ and two edges of the same color $c$ from $\set{3,4,5,6}$, hence it is poor.
	If $e$ is incident only with an incoming or an outgoing vertex of $P_1$ (or $P_2$),
	then it is incident with four distinct colors, and thus rich.
	If $e$ is incident neither with an incoming nor an outgoing vertex, then it is poor.
	
	This means that all the edges of $G$ are normal and hence the proof is completed.
\end{proof}

\section{Conclusion}

In this paper, we extended the approach of Mazzuoccolo and Mkrtchyan~\cite{MazMkr20b} 
for constructing a normal $6$-edge-coloring of permutation snarks to cubic graphs of oddness $2$.
It turned out that this extension required significantly more technical considerations, and consequently, 
we do not see a straightforward way to further generalize our proof to graphs of oddness $k$. 
Even the case with $k=4$ appears to be challenging.

On the other hand, recall that Conjecture~\ref{con:Nor} states that $5$ colors are sufficient to color any 
bridgeless cubic graph. 
A first step toward resolving this conjecture could thus be to find a technique for constructing
a normal $5$-edge-coloring of permutation snarks.

\paragraph{Acknowledgement.} 
I.~Fabrici, R.~Sot\'{a}k, and D.~\v{S}vecov\'{a} were supported by the Slovak Research and Development Agency under the contract APVV--23--0191.
B.~Lu\v{z}ar was partially supported by the Slovenian Research and Innovation Agency Program P1--0383 and the projects J1--3002 and J1--4008.
	
\bibliographystyle{plain}
{
	\bibliography{References}
}

\end{document}